\newtheorem{lemma}{Lemma}[section]
\def\qed{\hfill$\Box$ \bigskip}%%%%% end of proof
\newtheorem{theorem}{Theorem}[section]
\newtheorem{corollary}{Corollary}[section]
\newtheorem{remark}[theorem]{Remark}
\newtheorem{example}[theorem]{Example}
\begin{document}

\title[Langford sequences and a product of digraphs]{Langford sequences and a product of digraphs}

\author{S. C. L\'opez}
\address{%
Departament de Matem\`{a}tiques\\
Universitat Polit\`{e}cnica de Catalunya\\
C/Esteve Terrades 5\\
08860 Castelldefels, Spain}
\email{susana.clara.lopez@upc.edu}

\author{F. A. Muntaner-Batle}
\address{Graph Theory and Applications Research Group \\
 School of Electrical Engineering and Computer Science\\
Faculty of Engineering and Built Environment\\
The University of Newcastle\\
NSW 2308
Australia}
\email{famb1es@yahoo.es}
\date{\today}
\maketitle

\begin{abstract}
Skolem and Langford sequences and their many generalizations have applications in numerous areas. The $\otimes_h$-product is a generalization of the direct product of digraphs. In this paper we use the $\otimes_h$-product and super edge-magic digraphs to construct an exponential number of Langford sequences with certain order and defect. We also apply this procedure to extended Skolem sequences.

\end{abstract}

\section{Introduction}
For $m\le n$, we denote the set $\{m,m+1,\ldots, n\}$ by $[m,n]$.
A {\it Skolem sequence} \cite{NicMar67,Sko57} of order $m$ is a sequence of $2m$ numbers $(s_1,s_2,\ldots,s_{2m})$ such that (i) for every $k\in [1,m]$ there exist exactly two subscripts $i,j\in [1,2m]$ with $s_i=s_j=k$, (ii) the subscripts $i$ and $j$ satisfy the condition $|i-j|=k$.
A Skolem sequence of order $4$ is for instance $(4,2,3,2,4,3,1,1)$.
It is well known that Skolem sequences of order $m$ exist if and only if $m\equiv 0$ or $1$ (mod $4$).

Skolem introduced in \cite{Sko58} what is now called a {\it hooked Skolem sequence} of order $m$, where there exists a zero at the second to last position of the sequence containing $2m+1$ elements. Later on, in 1981, Abrham and Kotzig \cite{AbrKot81}
introduced the \textit{extended Skolem sequence}, where the zero is allowed to appear in any position of the sequence. Notice that from every Skolem sequence we can obtain two trivial extended Skolem sequences just by adding a zero either in the first or in the last position. In this paper, all extended Skolem sequences that we refer to are non trivial, unless otherwise specified.

Let $d$ be a positive integer. A {\it Langford sequence} of order $m$ and defect $d$ \cite{Sim} is a sequence $(l_1,l_2,\ldots, l_{2m})$ of $2m$ numbers such that (i) for every $k\in [d,d+m-1]$ there exist exactly two subscripts $i,j\in [1,2m]$ with $l_i=l_j=k$, (ii) the subscripts $i$ and $j$ satisfy the condition $|i-j|=k$. Langford sequences, for $d=2$, where introduced in \cite{Langford} and they are referred as \textit{perfect Langford sequences}. Notice that, a Langford sequence of order $m$ and defect $d=1$ is a Skolem sequence of order $m$.

Bermond, Brower and Germa on one side \cite{BerBroGer} and Simpson on the other side \cite{Sim} showed that Langford sequences of order $m$ and defect $d$ exist if and only if the following conditions hold:
(i) $m\ge 2d-1$, and
(ii) $m\equiv 0$ or $1$ (mod $4$) if $d$ is odd; $m\equiv 0$ or $3$ (mod $4$) if $d$ is even.

%Priday and Davies \cite {PriDav59} generalized perfect Langford sequences by admitting a zero to appear in position $2n$. They called these sequences {\it hooked Langford sequences}. Priday and Davies also gave necessary and sufficient conditions for the existence of both, perfect and hooked Langford sequences.

%In this paper, a {\it extended Langford sequence} of order $m$ and defect $d$ is a sequence $(l_1,l_2,\ldots, l_{2m+1})$ of $2m+1$ numbers such that (i) for every $k\in [d,d+m-1]$ there exists exactly two subscripts $i,j\in [1,2m]$ with $l_i=l_j=k$, (ii) the subscripts $i$ and $j$ satisfy the condition $|i-j|=k$ and (iii) one position is occupied by zero.

Denote by $\sigma_m$ the number of Skolem sequences of order $m$. It is clear that if $m\equiv 2$ or $3$ (mod $4$) then $\sigma_m=0$. Abraham showed in \cite{Abr} the next result.

\begin{theorem}\cite{Abr}\label{theo_lower_bound_Skolem}
It is $\sigma_m\ge 2^{\lfloor m/3\rfloor}$ for every $m\equiv 0$ or $1$ (mod $4$).
\end{theorem}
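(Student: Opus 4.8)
The cleanest route to an exponential lower bound of this shape is to produce, for each admissible $m$, a single ``master'' Skolem sequence of order $m$ carrying $\lfloor m/3\rfloor$ mutually independent local reconfigurations, each turning a Skolem sequence of order $m$ into another one, and then apply an arbitrary subset of them. Throughout I would identify a Skolem sequence of order $m$ with a partition of $[1,2m]$ into $m$ pairs, the pair $P_k$ of ``length'' $k$ being the set of the two positions carrying the symbol $k$; and for a finite set $B$ I write $\rho_B$ for the reflection $x\mapsto\min B+\max B-x$, which preserves all distances.

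The local move is a \emph{switch}: a triple of pairs $P_{k_1},P_{k_2},P_{k_3}$ of the sequence, with distinct lengths, whose union $B=P_{k_1}\cup P_{k_2}\cup P_{k_3}$ has six elements, is centrally symmetric (that is, $\rho_B(B)=B$), and whose pairing $\{P_{k_1},P_{k_2},P_{k_3}\}$ is not fixed by $\rho_B$. For such a configuration the reflected pairs $\rho_B(P_{k_1}),\rho_B(P_{k_2}),\rho_B(P_{k_3})$ occupy the same six positions, realize the same three lengths, and form a different pairing; hence replacing $P_{k_1},P_{k_2},P_{k_3}$ by their reflections and leaving everything else untouched yields again a Skolem sequence of order $m$. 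Switches are plentiful: whenever $0<\alpha<\beta<\gamma$ are such that $c\pm\alpha$, $c\pm\beta$, $c\pm\gamma$ are six distinct integers in $[1,2m]$, the pairs $\{c-\gamma,c+\beta\}$, $\{c-\beta,c+\alpha\}$, $\{c-\alpha,c+\gamma\}$ have the three distinct lengths $\alpha+\beta$, $\alpha+\gamma$, $\beta+\gamma$ and form a switch on $B=\{c\pm\alpha,c\pm\beta,c\pm\gamma\}$; taking $B$ non-interval, or a less symmetric pairing, one also reaches triples of lengths involving small values. For instance, the three pairs on positions $1$--$6$ of the order-$4$ sequence $(4,2,3,2,4,3,1,1)$ form a switch whose reflection gives the Skolem sequence $(3,4,2,3,2,4,1,1)$, already showing $\sigma_4\ge 2$.

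It would then suffice to exhibit, for each $m\equiv 0$ or $1\pmod 4$, a Skolem sequence of order $m$ whose $m$ pairs split into $\lfloor m/3\rfloor$ switches with pairwise disjoint supports, together with at most two ``frozen'' pairs holding the $m-3\lfloor m/3\rfloor\le 2$ symbols used by no switch. Granting this, the theorem follows: the switch supports are disjoint and reflecting any one switch preserves the Skolem property, so reflecting any subset of the switches does too, yielding $2^{\lfloor m/3\rfloor}$ Skolem sequences of order $m$; and two sequences coming from different subsets disagree on the support of some switch, hence are distinct, whence $\sigma_m\ge 2^{\lfloor m/3\rfloor}$. I expect the construction of this master sequence to be the real obstacle: one must split $[1,m]$ into $\lfloor m/3\rfloor$ triples each realizable as the length-set of a switch (plus the at most two leftover symbols), and \emph{consistently} pack the corresponding centrally symmetric six-element position sets disjointly into $[1,2m]$, which they then tile up to the at most four positions of the frozen pairs (the bookkeeping is exact: $6\lfloor m/3\rfloor+2(m-3\lfloor m/3\rfloor)=2m$). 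The natural way to carry this out is a purpose-built construction analysed according to the residue of $m$ modulo a small number, handling the longest pairs and the one or two smallest symbols separately; alternatively, one may extract $\lfloor m/3\rfloor$ disjoint switches directly from one of the classical explicit families of Skolem sequences, whose existence for $m\equiv 0,1\pmod 4$ we are allowed to assume.
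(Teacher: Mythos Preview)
The paper does not prove this theorem at all: it is quoted from Abrham~\cite{Abr} and merely used as an input in Corollary~\ref{coro: bound_c_langford_from_skolem}. So there is no ``paper's own proof'' to compare against.

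As for your proposal itself, the strategy you outline---building a single Skolem sequence carrying $\lfloor m/3\rfloor$ pairwise disjoint local ``switches'' and then toggling an arbitrary subset of them---is indeed the shape of Abrham's original argument, and your reduction (disjoint supports, reflection preserves the Skolem property, different subsets give different sequences) is correct. However, you have not actually given a proof: you explicitly flag the construction of the master sequence with $\lfloor m/3\rfloor$ disjoint switches as ``the real obstacle'' and then do not carry it out. Everything hinges on that step, and neither of the two routes you sketch (a case-by-case construction modulo a small number, or extracting switches from a classical explicit Skolem family) is executed. Until you produce the master sequence and verify that the required triples of lengths can be packed into disjoint centrally symmetric six-blocks inside $[1,2m]$, what you have is a plausible plan rather than a proof.
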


For the graph theory notation and terminology used in this paper, unless otherwise specified, we follow \cite{BaMi,CH,G,Wa,W}. However, in order to make this paper reasonably self contained, we mention that by a $(p,q)$-graph we mean a graph of order $p$ and size $q$. We also point out that we allow graphs (and digraphs) to have loops. If we need to consider graphs without loops nor multiple edges we will refer to them as simple graphs.
The underlying graph of a digraph $D$, und$(D)$, is the graph obtained from $D$ after removing the orientation of the arcs.
In general, we say that a digraph $D$ admits a labeling $f$ if its underlying graph admits the labeling $f$.
Bloom and
Ruiz \cite{BloRui96} introduced a generalization of {\it graceful
labelings} (see \cite{G} for a formal definition of graceful
labeling), that they called {\it $k$-equitable labelings}.
Let $G$ be a $(p,q)$-graph and let $g:V(G) \longrightarrow
\mathbb{Z}$ be an injective function with the property that the new
function $h:E(G) \longrightarrow \mathbb{N}$ defined by the rule
$h(uv)=|g(u)-g(v)|$ for every $ uv\in E$ assigns the same integer
to exactly $k$ edges. Then $g$ is said to be a $k$-equitable labeling and $G$
a $k${\it -equitable graph}.
In  \cite{BloRui96} the  authors called a $k$-equitable
labeling, {\it optimal}, when $g$ assigns all the elements from the set
$[1,p]$ to the elements of $V(G)$. Barrientos \cite{Ba2} called a $k$-equitable labeling {\it complete} if the induced edge labels are all the elements in $[1,w]$, where $w$ is the number of distinct edge-labels. Although, $1$-equitable labelings are defined in the context of simple graphs, it is not hard to extend the concept to graphs with loops, where the label of any loop is zero.

At this point, for any Skolem sequence of order $m$, that is to say, of $2m$ elements, we define a {\it Skolem labeling} $g$ of the directed matching $m\overrightarrow{K_2}$, up to isomorphism. Let $f:E(m\overrightarrow{K_2})\rightarrow [1,m]$ be any bijective function such that if $f(u,v)=k$, then the Skolem labeling $g$ of $m\overrightarrow{K_2}$ assigns to $u$ one of the two positions occupied by $k$ and to $v$ the other position occupied by $k$, in such a way that the label of $u$ is strictly smaller than the label of $v$. See the following example.

\begin{example}\label{ex: Matching_of_Skolem_4}
Consider the Skolem sequence $(4,2,3,2,4,3,1,1)$. Then, the corresponding Skolem labeling of the matching $4\overrightarrow{K_2}$ is shown in Fig. \ref{Fig_2}.

\begin{figure}[h]
  \centering
  % Requires \usepackage{graphicx}
  \includegraphics[width=97pt]{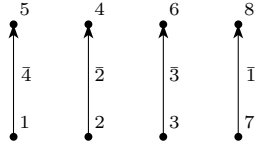}\\
  \caption{The Skolem labeling of $4\protect\overrightarrow{K_2}$ associated to  $(4,2,3,2,4,3,1,1)$.}\label{Fig_2}
\end{figure}

\end{example}

Notice that, every Skolem labeling $g$ of $m\overrightarrow{K_2}$ is a bijective function $g:V(m\overrightarrow{K_2})\rightarrow [1,2m]$ such that, the set of differences $\{g(v)-g(u): (u,v)\in E(m\overrightarrow{K_2})\}=[1,m]$. Thus, $g$ is a complete optimal $1$-equitable labeling of $m\overrightarrow{K_2}$. Moreover, any complete optimal $1$-equitable labeling of $m\overrightarrow{K_2}$ raises an associated Skolem sequence.

In a similar way, we can define a {\it Langford labeling} of $m\overrightarrow{K_2}$, up to isomorphism, for any Langford sequence of order $m$ and defect $d$. Let $f:E(m\overrightarrow{K_2})\rightarrow [d,d+m-1]$ be a bijective function defined as follows: if $f(u,v)=k$, then the Langford labeling $g$ of $m\overrightarrow{K_2}$ assigns to $u$ one of the two positions occupied by $k$ and to $v$ the other position occupied by $k$, in such a way that the label of $u$ is strictly smaller than the label of $v$.
Once again, notice that every Langford labeling $g$ of $m\overrightarrow{K_2}$ is a bijective function $g:V(m\overrightarrow{K_2})\rightarrow [1,2m]$ and, the set of differences $\{g(v)-g(u): (u,v)\in E(m\overrightarrow{K_2})\}=[d,d+m-1]$. Thus, $g$ is an optimal $1$-equitable labeling of $m\overrightarrow{K_2}$. Moreover, any optimal $1$-equitable labeling of $m\overrightarrow{K_2}$, with consecutive set of induced differences on the edges, raises an associated Langford sequence of defect $d$, where $d$ is the minimum of this set.

The bijection among Skolem and Langford sequences and the corresponding Skolem and Langford labelings of $m\overrightarrow{K_2}$ can also be generalized when dealing with hooked (or extended) Skolem sequences, in a natural way. In this case, instead of considering a labeling of $m\overrightarrow{K_2}$, we consider a labeling of $m\overrightarrow{K_2}\cup \overrightarrow{L}$, where the vertex of the loop is labeled with the position of zero in the given sequence. We will refer to this labeling as an \textit{extended Skolem labeling of} $m\overrightarrow{K_2}\cup \overrightarrow{L}$.

Skolem, Langford sequences and their many generalizations have applications in numerous areas, see for instance \cite{FranMen2009}. Although their origin is in the fifties, many recent papers have been contributed to their study and applications from different points of view, as for instance \cite{MatNorSha,MorLin,ShaSil12,ShaSil14}.
In this paper, we study Skolem and Langford sequences through (extended) Skolem and Langford labelings of $m\overrightarrow{K_2}$. By multiplying a Skolem labeled matching $m\overrightarrow{K_2}$ by a particular family of labeled $1$-regular digraphs of order $n$, we obtain a Langford labeled matching $(mn)\overrightarrow{K_2}$. We will show that this procedure can also be applied to obtain Langford sequences from existing ones, and that in this way, we can obtain a lower bound for the number of Langford sequences, for particular values of the defect. We also extend this procedure to hooked and extended Skolem sequences. The organization of the paper is the following one. Section \ref{section: the tools} contains the necessary terminology and previous known results. Section \ref{section_Skolem_Langford} is focused on the study of Skolem and Langford sequences, and finally, Section \ref{section_Hooked_Skolem_Langford} shows an extension to hooked and extended Skolem sequences.

\section{The tools: labelings and the $\otimes_h$-product}
\label{section: the tools}

We start this section by completing the terminology about labelings that we use in the paper.
Kotzig and Rosa defined in 1970 \cite{KotRos70} the concept of edge-magic graphs and edge-magic labelings for simple graphs as follows. Let $G$ be a simple $(p,q)$-graph. A bijective function $f:V(G)\cup E(G)\rightarrow [1,p+q]$ is an {\it edge-magic labeling} of $G$ if the sum $f(u)+f(uv)+f(v)=k,$ for every $uv\in E(G)$. If such a function exits then $G$ is called an {\it edge-magic graph}. The constant $k$ is the {\it valence} of the labeling in \cite{KotRos70}. However, different authors have denoted the valence by other names, as for instance, the {\it magic sum} \cite{Wa} or the {\it magic weight} \cite{BaMi}.

%The following result by Kotzig and Rosa found in {\bf Where} has a tight relationship with the results developed in Section \ref{section_decompositions}.
%\begin{theorem}
%The complete bipartite graph $K_{m,n}$ is edge-magic for every pair of positive integers $m$ and $n$.
%\end{theorem}

Enomoto et al. in \cite{E} defined in 1998 the concepts of super edge-magic labelings and of super edge-magic graphs as follows. A {\it super edge-magic labeling} of a simple $(p,q)$-graph $G$ is an edge-magic labeling $f$ that has the extra property that $f(V(G))=[1,p]$. In that case, $G$ is called a {\it super edge-magic graph}. However, Acharya and Hegde had introduced in \cite{AH} an equivalent concept under the name {\it strongly indexable graphs}.
In this paper, we will use a more general definition of super edge-magic labelings that does not restrict only to simple graphs, but to graphs that admit at most one loop attached at each vertex in a natural way. That is, the magic sum of a loop is obtained  by adding the label on the loop plus twice the label of the vertex of the loop. Such a generalization was implicitly provided by Figueroa et al. in \cite{F1}.
One of the key ideas when dealing with super edge-magic labelings is that to obtain such a labeling of a graph it is enough to exhibit the labels of the vertices.

\begin{lemma}\cite{F2}\label{lemma_SEM_consecutives_sums}
A $(p,q)$-graph $G$ is super edge-magic if and only if there exists a bijective function $f:V(G)\rightarrow [1,p]$ such that the set $S=\{f(u)+f(v):\ uv\in E(G)\}$ consists of $q$ consecutive integers. In such case, $f$ extends to a super edge-magic labeling of $G$ with magic sum $p+q+s$, where $s=\min (S)$.
\end{lemma}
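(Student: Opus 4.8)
The plan is to prove both implications by passing between an edge-magic labeling of $G$ and the restriction of that labeling to $V(G)$, exploiting the rigidity forced by the requirement that the vertices receive exactly the labels $[1,p]$.

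First I would treat the forward direction. Assume $G$ is super edge-magic and let $F:V(G)\cup E(G)\to[1,p+q]$ be a super edge-magic labeling with magic sum $k$, so that $F(V(G))=[1,p]$ and hence $F(E(G))=[p+1,p+q]$. Put $f=F|_{V(G)}$, a bijection onto $[1,p]$. For every edge $uv$ the magic condition gives $F(uv)=k-f(u)-f(v)$; as $uv$ ranges over $E(G)$, the left side ranges over $[p+1,p+q]$, so $\{f(u)+f(v):uv\in E(G)\}=\{k-(p+q),\dots,k-(p+1)\}$, a block of $q$ consecutive integers with minimum $s=k-p-q$. This is precisely the set $S$, and it also yields $k=p+q+s$.

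Next I would do the converse, which is the constructive half. Suppose $f:V(G)\to[1,p]$ is a bijection with $S=\{f(u)+f(v):uv\in E(G)\}$ a set of $q$ consecutive integers. Since $|E(G)|=q$ and $|S|=q$, the sums $f(u)+f(v)$ are pairwise distinct and $S=[s,s+q-1]$ with $s=\min S$. Extend $f$ to $F$ by keeping the vertex labels and setting $F(uv)=p+q+s-f(u)-f(v)$ for each edge (reading a loop at $v$ as the ``edge'' $vv$ with sum $2f(v)$, in accordance with the convention adopted for loops). Then one checks: (i) $F(uv)\in[p+1,p+q]$ because $f(u)+f(v)\in[s,s+q-1]$; (ii) $F$ is injective on $E(G)$ because those sums are distinct, so $F$ together with $f$ is a bijection $V(G)\cup E(G)\to[1,p+q]$; and (iii) $F(u)+F(uv)+F(v)=p+q+s$ for every edge. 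Hence $F$ is a super edge-magic labeling of $G$ with magic sum $p+q+s$.

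The argument is essentially bookkeeping, and I do not expect a genuine obstacle. The one point needing a moment's care is the observation that ``$S$ consists of $q$ consecutive integers'' forces the $q$ edge-sums to be mutually distinct, which is exactly what makes the complementary values $p+q+s-f(u)-f(v)$ a bijection onto $[p+1,p+q]$ rather than merely a function into it; one should also confirm that the loop convention is compatible with each of the three verifications above, which it is.
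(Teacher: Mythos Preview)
Your proof is correct and is precisely the standard argument for this well-known characterization. Note, however, that the paper does not give its own proof of this lemma: it is simply quoted from \cite{F2} without argument, so there is nothing in the paper to compare against beyond observing that your write-up supplies the routine bookkeeping the authors chose to omit.
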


Unless otherwise specified, whenever we refer to a function as a super edge-magic labeling we will assume that it is a function $f$ as in Lemma \ref{lemma_SEM_consecutives_sums}.
In \cite{F1} Figueroa et al., introduced the concept of {\it super
 edge-magic digraph} as follows:
a digraph $D=(V,E)$ is super edge-magic if its underlying graph is
super edge-magic.

\subsection{The $\otimes_h$-product} The $\otimes_h$-product was introduced by Figueroa et al. \cite{F1} as a tool to obtain (super) edge-magic graphs from existing ones. Later on, this product has been applied to different types of labelings. See for instance \cite{ILMR,LopMunRiu1,LopMunRiu8}.
Let $D$ be a digraph and let $\Gamma $ be a family of digraphs such that $V(F)=V$, for every $F\in \Gamma$. Consider any function $h:E(D)\longrightarrow\Gamma $.
Then the product $D\otimes_{h} \Gamma$ is the digraph with vertex set the Cartesian product $V(D)\times V$ and $((a,x),(b,y))\in E(D\otimes_{h
    }\Gamma)$ \index{$\otimes_h$-product} if and only if $(a,b)\in E(D)$ and $(x,y)\in
    E(h ((a,b)))$. Let $A(D)$ and $A(F)$ be the adjacency matrices of $D$ and $F\in \Gamma$, respectively, when the vertices of $D$ are indexed as $V(D)=\{a_1,a_2,\ldots, a_m\}$ and the vertices of $F$ as $V=\{x_1,x_2,\ldots, x_n\}$. Let the vertices of $D\otimes_{h} \Gamma$ be indexed as
$\{(a_1,x_1),\ldots, (a_1,x_n),(a_2,x_1),\ldots, (a_m,x_n)\}.$ Then,
the adjacency matrix of $D\otimes_{h} \Gamma$, $A(D\otimes_{h} \Gamma)$, is obtained by multiplying every $0$ entry of $A(D)$ by the $|V|\times |V|$ null matrix and every $1$ entry of $A(D)$ by $A(h(a,b))$, where $(a,b)$ is the arc related to the corresponding $1$ entry. Notice that when $h$ is constant, the adjacency matrix of $D\otimes_{h} \Gamma$ is just the classical Kronecker product $A(D)\otimes A(h(a,b))$. Thus, when $|\Gamma |=1$, we just write $D\otimes\Gamma$.

\begin{example}\label{Example_1: product of a matching by rotated}
Let $D$ be the digraph defined by $V(D)=[1,8]$ and $E(D)=\{(1,5),(2,4),(3,6),(7,8)\}$. Let $\Gamma=\{F_1,F_2\}$, where $V(F_1)=V(F_2)=[1,3]$, $E(F_1)=\{(1,1), (2,3),(3,2)\}$ and $E(F_2)=\{(1,2),(2,1),(3,3)\}$. Consider the function $h:E(D)\rightarrow \Gamma$ defined by $h((1,5))=h((2,4))=F_1$ and $h((3,6))=h((7,9))=F_2$. Then, the digraph $D\otimes_h\Gamma$ is shown in Fig. \ref{Fig_1}.
\begin{figure}[h]
  \centering
  % Requires \usepackage{graphicx}
  \includegraphics[width=337pt]{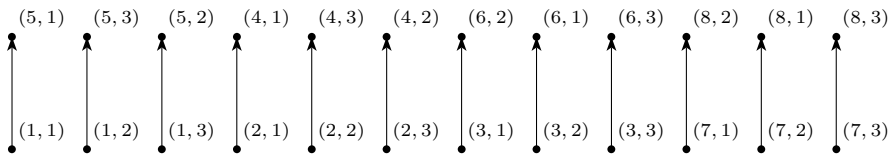}\\
  \caption{The digraph $D\otimes_h\Gamma$. }\label{Fig_1}
\end{figure}
\end{example}

\subsection{The $\otimes_h$-product applied to labelings}

Let $\mathcal{S}_n^k$ denote the set of all super edge-magic labeled digraphs of order and size equal to $n$ with the minimum sum of the labels of the adjacent vertices being $k$ (that is, the minimum of the set $S$ introduced in Lemma \ref{lemma_SEM_consecutives_sums}), where each vertex takes the name of its label. L\'opez et al. obtained in \cite{LopMunRiu8} the following result.

\begin{theorem}\cite{LopMunRiu8}\label{producte_super_k}
Assume that
$D$ is any (super) edge-magic digraph and
  $h$ is any function $h:E(D)\rightarrow \mathcal{S}_n^k$.
Then und$(D\otimes_h \mathcal{S}_n^k)$ is (super) edge-magic.
\end{theorem}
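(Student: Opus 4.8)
The plan is to handle the super edge-magic case first, since there Lemma~\ref{lemma_SEM_consecutives_sums} reduces the whole problem to exhibiting a single vertex labeling whose edge sums are consecutive. So assume $D$ is a super edge-magic $(p,q)$-digraph. By Lemma~\ref{lemma_SEM_consecutives_sums} there is a bijection $f:V(D)\to[1,p]$ with $\{f(a)+f(b):(a,b)\in E(D)\}=[c,c+q-1]$ for some integer $c$. Every $F\in\mathcal{S}_n^k$ has vertex set $[1,n]$ (each vertex is named by its own label) and, being super edge-magic with minimum adjacent-vertex sum $k$, satisfies $\{x+y:(x,y)\in E(F)\}=[k,k+n-1]$. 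Recall that $D\otimes_h\mathcal{S}_n^k$ has vertex set $V(D)\times[1,n]$, hence $pn$ vertices, and that each of the $q$ arcs of $D$ is blown up into the $n$ arcs of the corresponding copy of $F$, so $D\otimes_h\mathcal{S}_n^k$ has $qn$ arcs.

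First I would define $g:V(D)\times[1,n]\to[1,pn]$ by $g(a,x)=n(f(a)-1)+x$; since $f$ is a bijection onto $[1,p]$ and $x$ ranges over $[1,n]$, $g$ is a bijection onto $[1,pn]$. The arc $((a,x),(b,y))$ belongs to $D\otimes_h\mathcal{S}_n^k$ exactly when $(a,b)\in E(D)$ and $(x,y)\in E(h(a,b))$, and in that case
\[
g(a,x)+g(b,y)=n\bigl(f(a)+f(b)\bigr)-2n+(x+y).
\]
Fixing $f(a)+f(b)=c+i$ with $i\in[0,q-1]$ and letting $x+y$ run through $[k,k+n-1]$, these sums fill the block $\bigl[n(c+i)-2n+k,\ n(c+i)-2n+k+n-1\bigr]$ of exactly $n$ consecutive integers.

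The one point that really needs checking is that these $q$ blocks, indexed by $i=0,1,\ldots,q-1$, tile a single interval with neither gaps nor overlaps; this is precisely why the scaling factor in the definition of $g$ is $n$. Indeed, the largest element of the block for $i$ is $n(c+i)-n-1+k$, while the smallest element of the block for $i+1$ is $n(c+i+1)-2n+k=n(c+i)-n+k$, one unit larger. Hence $\{g(u)+g(v):uv\in E(\mathrm{und}(D\otimes_h\mathcal{S}_n^k))\}=[nc-2n+k,\ nc-2n+k+qn-1]$ is a set of $qn$ consecutive integers, and since the product has $qn$ edges, Lemma~\ref{lemma_SEM_consecutives_sums} shows that $g$ extends to a super edge-magic labeling with magic sum $pn+qn+(nc-2n+k)$. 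For the plain edge-magic case one argues in the same spirit, starting from an edge-magic labeling of $D$ together with super edge-magic labelings of the copies of $F$ and adding in the edge labels so that the valences line up; this case is in fact already contained in the original $\otimes_h$-product construction of Figueroa et al.~\cite{F1}. (If $D$ carries a pair of opposite arcs, one should note that an undirected edge of the product may then arise from two arcs, but the displayed sum does not depend on orientation, so the argument is unaffected.)
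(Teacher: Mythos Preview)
The paper does not actually prove Theorem~\ref{producte_super_k}; it is quoted verbatim from \cite{LopMunRiu8} with no argument given, so there is no in-paper proof to compare against. That said, your argument for the super edge-magic case is correct and is exactly the standard one: the vertex labeling $g(a,x)=n(f(a)-1)+x$ you introduce is precisely the map highlighted in Remark~\ref{remark: induced labeling product}, and your block-tiling check that the $q$ intervals of length $n$ abut perfectly is the essential point.

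One small comment: you treat the plain edge-magic case only by a gesture toward \cite{F1}. In the original proof in \cite{LopMunRiu8} this case is handled by combining the edge-magic labeling of $D$ (which now also labels the arcs) with the super edge-magic labelings of the factors, and one has to specify how the edge labels of the product are built so that all three summands $f(u)+f(uv)+f(v)$ line up; your sketch does not quite spell this out. Since the theorem is being cited rather than reproved here this is not a gap for the present paper, but if you were writing a self-contained proof you would want to make that step explicit.
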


Let $\mathcal{S}_n$ denote the set of all $1$-regular super edge-magic labeled digraphs of order $n$. In \cite{BaLinMunRiu}, Ba\v{c}a et al. gave a lower bound for the size of $\mathcal{S}_n$.

\begin{theorem}\cite{BaLinMunRiu}\label{theo:numberSEMofCICLES}
Let $C_n$ be a cycle on $n$ vertices, $n\ge 11$ odd. The number of super edge-magic labelings of the cycle $C_n$ is at least $5/4 \cdot 2^{\lfloor (n-1)/3\rfloor}+1.$
\end{theorem}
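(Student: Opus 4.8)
The plan is to convert the statement, via Lemma~\ref{lemma_SEM_consecutives_sums}, into a counting problem about cyclic arrangements of $[1,n]$, to exhibit one explicit (``base'') super edge-magic labeling, and then to manufacture exponentially many more from it by independent local surgeries, reading off the constants $5/4$ and $+1$ from a careful count at the end. For the reduction: by Lemma~\ref{lemma_SEM_consecutives_sums} a super edge-magic labeling of $C_n$ is exactly a bijection $f\colon V(C_n)\to[1,n]$ whose $n$ adjacent sums form $n$ consecutive integers; since every vertex lies on two edges, the sum of all adjacent sums equals $2(1+2+\cdots+n)=n(n+1)$, so if the sums are $\{s,s+1,\dots,s+n-1\}$ then $ns+\binom{n}{2}=n(n+1)$, forcing $s=(n+3)/2$. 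Hence $n$ must be odd (as assumed), and the set of adjacent sums is compelled to be $[(n+3)/2,(3n+1)/2]$. Writing $n=2m+1$ and subtracting $(n+1)/2$ from every label, I would work instead with cyclic arrangements of $\{-m,\dots,m\}$ whose $2m+1$ adjacent sums are exactly $\{-m,\dots,m\}$, a normalisation in which the ``negate every entry'' symmetry of the set of labelings is visible; I expect this to help produce the extra ``$+1$''.

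For the base labeling I would use the zigzag $0,-m,1,-(m-1),2,-(m-2),\dots,m-1,-1,m$ (then back to $0$), whose odd positions carry $0,1,\dots,m$ in order and whose even positions carry $-m,-(m-1),\dots,-1$ in order; a one-line check shows its $2m+1$ adjacent sums are precisely $\{-m,\dots,m\}$. The mechanism behind the exponential count is that such arrangements are far from rigid: if one keeps the ``small'' values $\{0,\dots,m\}$ on the odd positions and the ``large'' values $\{-m,\dots,-1\}$ on the even positions but permutes them, the $2m$ mixed adjacent sums group into $m$ ``dominoes'' $\{x_j,x_j+1\}$ together with one exceptional sum, and one checks that the arrangement stays valid precisely when an associated permutation of $[1,m]$ has all of its (entry $+$ index) sums distinct; after a parity split this reduces to two smaller instances of exactly the combinatorial condition underlying the proof of Theorem~\ref{theo_lower_bound_Skolem}. (One should not expect every super edge-magic labeling of $C_n$ to arise this way -- ``wilder'' ones exist -- but such a structured sub-family is all a lower bound needs.)

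Following the pattern of Theorem~\ref{theo_lower_bound_Skolem}, I would then cut the arrangement into $\lfloor(n-1)/3\rfloor$ consecutive ``blocks'' of about three positions each, chosen so that different blocks neither share an edge nor straddle the cyclic seam and so that each block independently admits a ``straight'' state and a ``switched'' state whose passage only permutes the adjacent sums internal to that block. This produces $2^{\lfloor(n-1)/3\rfloor}$ super edge-magic labelings, pairwise distinct because the state of each block can be read directly off the sequence. To sharpen $2^{\lfloor(n-1)/3\rfloor}$ to $\tfrac54\cdot 2^{\lfloor(n-1)/3\rfloor}+1$ I would (i) merge two adjacent blocks into one ``super-block'' admitting five mutually compatible states rather than the generic $2\times 2=4$, which multiplies the running count by $5/4$, and (ii) adjoin one further labeling lying outside the family -- for instance by applying the negation symmetry to a configuration that is not its own negation -- accounting for the ``$+1$''. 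I would also verify that $n\ge 11$ is exactly the threshold at which these refinements become legitimate (so that, in particular, there is room for the merged super-block together with at least one ordinary block).

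The main obstacle is the construction and verification of the block gadget itself: the obvious candidates -- transposing two positions, or reversing a short segment of the cyclic sequence -- all fail, each creating a repeated adjacent sum, so the ``switched'' state must be designed so that it exchanges the two boundary sums of the block between the two states while merely permuting the interior sums, and it may well have to trade a value with a neighbouring block in order to do so. Once a working gadget is in hand, checking that distinct blocks are mutually non-interfering and that the resulting labelings are all distinct is routine bookkeeping; but pinning down the five-state super-block responsible for the factor $5/4$ and isolating the sporadic ``$+1$'' labeling will each require its own short ad hoc argument, and the parity split above means the cases of even and odd $m$ may have to be handled separately.
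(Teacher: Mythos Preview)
The paper does not prove this theorem at all: it is quoted verbatim from \cite{BaLinMunRiu} and used as a black box to feed into the remark following Corollary~\ref{coro: bound_c_langford_from_skolem}. There is therefore no ``paper's own proof'' to compare against; any comparison would have to be with the original argument of Ba\v{c}a, Lin, Muntaner-Batle and Rius-Font, which is outside the present paper.

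As for your proposal on its own terms: it is a strategy outline, not a proof, and you say so yourself. The reduction via Lemma~\ref{lemma_SEM_consecutives_sums} to cyclic arrangements of $[1,n]$ with adjacent-sum set $[(n+3)/2,(3n+1)/2]$ is correct, and the zigzag base labeling is the standard one. The exponential mechanism you describe --- independent local ``switches'' on $\lfloor (n-1)/3\rfloor$ disjoint blocks --- is indeed the shape of the argument one expects. But the entire content of such a proof lives in the explicit construction of the block gadget, and you explicitly state that the obvious candidates (transpositions, short reversals) all fail and that ``the switched state must be designed'' with properties you have not yet achieved. Likewise the five-state super-block responsible for the $5/4$ and the sporadic labeling giving the $+1$ are promised rather than exhibited. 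Until those concrete gadgets are written down and verified, what you have is a plausible plan with the decisive step missing, not a proof.
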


Let $\Sigma_n$ be the set of all $1$-regular digraphs of order $n$. Figueroa et al. obtained in \cite{F1} the next result.
\begin{theorem}\cite{F1}\label{theo_copies_of_forest}
Let $F$ be an acyclic graph.
Consider any function $h:E(\overrightarrow{F})\rightarrow \Sigma_n$. Then, $\overrightarrow{F}\otimes_h\Sigma_n\cong n\overrightarrow{F}$.
\end{theorem}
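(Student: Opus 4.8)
The plan is to work directly with adjacency matrices, exploiting the description of $A(\overrightarrow{F}\otimes_h\Sigma_n)$ given just before the statement. First I would fix an indexing $V(\overrightarrow{F})=\{a_1,\dots,a_m\}$ and $V=\{x_1,\dots,x_n\}$ for the common vertex set of the members of $\Sigma_n$, so that the vertices of the product are indexed as $\{(a_1,x_1),\dots,(a_m,x_n)\}$ and $A(\overrightarrow{F}\otimes_h\Sigma_n)$ is the block matrix obtained from $A(\overrightarrow{F})$ by replacing each $1$-entry, say the one corresponding to the arc $(a,b)$, by the permutation matrix $A(h(a,b))$, and each $0$-entry by the $n\times n$ zero matrix. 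The key observation is that every $F\in\Sigma_n$ is $1$-regular, so $A(F)$ is a permutation matrix; hence each nonzero block is a permutation matrix, and the zero blocks sit exactly where $A(\overrightarrow{F})$ has zeros.

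The main structural step is then to show two things about $B:=A(\overrightarrow{F}\otimes_h\Sigma_n)$: that it is the adjacency matrix of a $1$-regular digraph, and that this digraph is a disjoint union of $n$ copies of $\overrightarrow{F}$. For the first point, I would observe that each row (resp. column) of $B$ is a concatenation of $m$ blocks of length $n$, exactly one of which is a row (resp. column) of a permutation matrix and the rest zero — because $F$ being acyclic and $1$-regular forces... wait, here is the subtlety: an acyclic graph need not be $1$-regular, so $A(\overrightarrow{F})$ need not have exactly one $1$ per row. I would instead argue that since $\overrightarrow{F}\otimes_h\Sigma_n$ has $mn$ vertices and its number of arcs equals (number of arcs of $\overrightarrow{F}$) $\times\, n = |E(F)|\cdot n$; and more importantly each vertex $(a,x)$ of the product has out-degree equal to $\deg^+_{\overrightarrow{F}}(a)$ (since for each arc $(a,b)$ leaving $a$, the permutation matrix $A(h(a,b))$ contributes exactly one out-neighbour $(b,y)$), and similarly in-degree equal to $\deg^-_{\overrightarrow{F}}(a)$. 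Consequently the map $(a,x)\mapsto a$ is a graph homomorphism $\overrightarrow{F}\otimes_h\Sigma_n\to\overrightarrow{F}$ that preserves in- and out-degrees at every vertex.

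Next I would define, for each $i\in[1,n]$, a candidate copy of $\overrightarrow{F}$ inside the product by following the permutations. Concretely, I would build the connected components: start from any vertex $(a,x)$ and note that the component of $(a,x)$ in the product projects onto the component of $a$ in $\overrightarrow{F}$, which (since $F$ is acyclic) is a tree; using that each nonzero block is a permutation matrix, one shows the projection restricted to this component is a bijection onto a whole component of $\overrightarrow{F}$. This is where the acyclicity of $F$ is essential: it guarantees that as we traverse edges of the component of $a$, the composition of the relevant permutations never creates an obstruction, so exactly one vertex in the component lies above each vertex of the tree. Hence every component of $\overrightarrow{F}\otimes_h\Sigma_n$ is isomorphic (via the projection) to a component of $\overrightarrow{F}$, and counting vertices ($mn$ in total, $n$ above each of the $m$ vertices of $\overrightarrow{F}$) shows there are exactly $n$ components, each a copy of $\overrightarrow{F}$. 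Therefore $\overrightarrow{F}\otimes_h\Sigma_n\cong n\overrightarrow{F}$.

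I expect the delicate point to be making the traversal argument rigorous — i.e.\ showing that over a tree component the fibres glue into disjoint isomorphic copies rather than, say, a single larger connected structure. The clean way to phrase it: orient/root each component of $F$ arbitrarily as a tree, pick a lift of the root, and observe that each edge of the tree, carrying the permutation matrix $A(h(\cdot))$, determines a unique lift of the adjacent vertex; since a tree has no cycles there is no consistency condition to check, so this process yields a well-defined embedding of the tree component into the product, and varying the root's lift over the $n$ vertices of the fibre produces $n$ pairwise disjoint such embeddings that together cover the preimage of that component. Assembling these over all components of $F$ gives the isomorphism $\overrightarrow{F}\otimes_h\Sigma_n\cong n\overrightarrow{F}$, completing the proof.
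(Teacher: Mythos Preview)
The paper does not prove this theorem; it is quoted from \cite{F1} with no argument supplied, so there is no in-paper proof to compare your attempt against.

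That said, your argument is correct. The essential points---each member of $\Sigma_n$ has a permutation matrix as adjacency matrix, so the projection $(a,x)\mapsto a$ preserves in- and out-degrees, and over each tree component of $F$ one can lift a chosen root in $n$ ways and propagate uniquely along the edges via the permutations to obtain $n$ pairwise disjoint isomorphic copies of that component---are exactly what makes the statement true, and the acyclicity of $F$ is used precisely where you indicate: to guarantee there is no cycle along which a consistency condition on the composite permutation could fail. One cosmetic slip: the line ``there are exactly $n$ components, each a copy of $\overrightarrow{F}$'' is not literally right when $F$ itself has $c>1$ components (there are then $nc$ components of the product, each isomorphic to a component of $\overrightarrow{F}$); your final paragraph, however, already handles this correctly by working one tree component at a time and then assembling.
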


A {\it rotation super edge-magic digraph} of order $n$ was introduced in \cite{LopMunRiu1}. Let $M=(a_{i,j})$ be a square matrix of order $n$. The matrix $(a^R_{i,j})$ is the {\it rotation of the matrix} $M$, denoted by $M^R$, when $a^R_{i,j}=a_{n+1-j,i}.$ Graphically this corresponds to a rotation of the matrix by $\pi/2$ radiants clockwise. A digraph $S$ is said to be a {\it rotation super edge-magic {\it digraph} of order} $n$, if its adjacency matrix is the rotation of the adjacency matrix of an element in $\mathcal{S}_n$. The expression $\mathcal{RS}_n$ denotes the set of all digraphs that are rotation super edge-magic digraphs of order $n$.

The following lemma and application of the $\otimes_h$-product to $k$-equitable digraphs were given in \cite{LopMunRiu1}.
\begin{lemma}\cite{LopMunRiu1}\label{coro: the unicity of $i-j=k$}
Let $S$ be a digraph in $\mathcal{RS}_n$ and let $k$ be any integer.
 If $|k|\le (n-1)/2$ then there exists an unique arc $(i,j)\in E(S)$ such that $i-j=k$.
\end{lemma}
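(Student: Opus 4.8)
The goal is to understand, for a rotation super edge-magic digraph $S \in \mathcal{RS}_n$, which differences $i-j$ occur on arcs of $S$, and to show that each difference of absolute value at most $(n-1)/2$ occurs exactly once. The natural strategy is to translate the claim about $S$ into a claim about the element $S' \in \mathcal{S}_n$ whose adjacency matrix rotates to that of $S$, and then use Lemma~\ref{lemma_SEM_consecutives_sums}: since $S'$ is $1$-regular super edge-magic of order $n$, there is a bijection $f:V(S')\to[1,n]$ with $\{f(u)+f(v):(u,v)\in E(S')\}$ a set of $n$ consecutive integers; because $|E(S')|=n$ and the sums range over the edge set, these $n$ sums are \emph{all distinct}. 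So in $S'$ every achievable vertex-sum is achieved exactly once.

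First I would pin down precisely what the rotation operation does to the arc set. If $A=(a_{i,j})$ is the adjacency matrix of $S'$ and $A^R=(a^R_{i,j})$ with $a^R_{i,j}=a_{n+1-j,i}$, then $(i,j)\in E(S)$ iff $a^R_{i,j}=1$ iff $a_{n+1-j,i}=1$ iff $(n+1-j,\,i)\in E(S')$. Thus the arcs of $S$ are exactly the pairs $(i,j)$ such that $(n+1-j,\,i)$ is an arc of $S'$. Now observe that for such an arc the quantity $i-j$ is controlled by the \emph{sum} of the endpoints of the corresponding arc in $S'$: writing $(u,v)=(n+1-j,i)\in E(S')$ we get $u+v = (n+1-j)+i = (n+1) + (i-j)$, so $i-j = (u+v)-(n+1)$. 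Hence the multiset of differences $\{i-j:(i,j)\in E(S)\}$ is precisely the multiset of sums $\{u+v:(u,v)\in E(S')\}$ shifted by $-(n+1)$.

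Combining the two observations: the differences occurring on arcs of $S$ are $n$ consecutive integers, each occurring exactly once, namely the integers $s-(n+1),\,s-(n+1)+1,\ldots,\,s-(n+1)+(n-1)$ where $s=\min S$ is the minimum edge-sum of $S'$. It remains to check that every integer $k$ with $|k|\le (n-1)/2$ lies in this length-$n$ interval of consecutive integers. For a $1$-regular super edge-magic digraph of order $n$ the minimum vertex-sum $s$ satisfies $3\le s$ and $s+(n-1)\le 2n-1$ (the edge-sums are distinct elements of the range $[1+2,\,(n-1)+n]=[3,2n-1]$, and there are $n$ of them), which forces $3\le s\le n$. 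Therefore the interval of differences $[s-n-1,\,s-2]$ contains $[-(n-1)/2,\,(n-1)/2]$: its left endpoint is $s-n-1\le -1 \le -(n-1)/2$ once one checks the extreme case $s=n$ gives $-1$, and more carefully $s\ge 3$ gives left endpoint $\le n-2 \cdot$---here I would simply verify the two endpoint inequalities $s-n-1\le -(n-1)/2$ and $s-2\ge (n-1)/2$, i.e. $s\le (n+1)/2$ and $s\ge (n+3)/2$; since these cannot both hold, the correct bookkeeping is that the \emph{union} over the two natural placements, or rather the single known value of $s$, still sandwiches $[-(n-1)/2,(n-1)/2]$, and this is exactly the elementary inequality to nail down.

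The main obstacle is this last numerical step: one must show that whatever the actual minimum edge-sum $s$ of the underlying super edge-magic cycle is, the resulting window of realized differences is wide enough and positioned to cover all $k$ with $|k|\le (n-1)/2$. This is not automatic from "$n$ consecutive integers" alone—it genuinely uses the constraint that $f(V(S'))=[1,n]$ so that edge-sums lie in $[3,2n-1]$, pinning $s$ down tightly enough. Everything else (the rotation computation, the distinctness of sums from $1$-regularity plus Lemma~\ref{lemma_SEM_consecutives_sums}) is routine, and the uniqueness of the arc with $i-j=k$ falls out immediately once we know the differences form an $n$-element set with no repetitions.
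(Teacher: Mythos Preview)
The paper does not supply its own proof of this lemma; it is quoted from \cite{LopMunRiu1}. So there is nothing to compare against, and the question is only whether your argument is complete.

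Your reduction is correct and is the natural one: the rotation formula gives $(i,j)\in E(S)\iff (n+1-j,\,i)\in E(S')$, hence $i-j=(u+v)-(n+1)$ for the corresponding arc $(u,v)$ of $S'\in\mathcal{S}_n$, and by Lemma~\ref{lemma_SEM_consecutives_sums} together with $1$-regularity the $n$ edge-sums of $S'$ are $n$ distinct consecutive integers. So far so good.

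The gap is exactly where you flag it: you never determine \emph{which} $n$ consecutive integers the edge-sums of $S'$ are, and your attempt to bound $s=\min S$ via $3\le s\le n$ is too weak (as you yourself notice, the two inequalities $s\le (n+1)/2$ and $s\ge (n+3)/2$ you would need cannot both hold). The paragraph that follows, about ``the union over the two natural placements,'' does not repair this.

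The missing step is a double count using $1$-regularity. In a $1$-regular digraph of order $n$ every vertex has in-degree $1$ and out-degree $1$, so
\[
\sum_{(u,v)\in E(S')}(u+v)=2\sum_{i=1}^{n} i = n(n+1).
\]
On the other hand, if the edge-sums are $s,s+1,\ldots,s+n-1$, their total is $ns+\binom{n}{2}$. Equating gives $s=(n+3)/2$ (in particular $n$ must be odd, else $\mathcal{S}_n=\varnothing$ and the lemma is vacuous). Hence the edge-sums of $S'$ are exactly $\{(n+3)/2,\ldots,(3n+1)/2\}$, and subtracting $n+1$ the differences $i-j$ over arcs of $S$ are exactly $\{-(n-1)/2,\ldots,(n-1)/2\}$, each occurring once. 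That is the lemma.
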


\begin{theorem}\cite{LopMunRiu1}\label{theo: product_k_equitable}
Let $D$ be an (optimal) $k$-equitable digraph and let $h:E(D)\rightarrow \mathcal{RS}_n$ be any function. Then $D\otimes_h\mathcal{RS}_n$ is (optimal) $k$-equitable.
\end{theorem}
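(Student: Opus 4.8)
The plan is to exhibit an explicit labeling $\phi$ of $\mathrm{und}(D\otimes_h\mathcal{RS}_n)$ built from a $k$-equitable labeling $g$ of $D$, exploiting the ``rotation'' structure of the digraphs in $\mathcal{RS}_n$. Put $p=|V(D)|$ and, for a vertex $(a,x)$ of $D\otimes_h\mathcal{RS}_n$ (with $a\in V(D)$, $x\in[1,n]$), define $\phi(a,x)=n\bigl(g(a)-1\bigr)+x$. If $\phi(a,x)=\phi(b,y)$, then $n\bigl(g(a)-g(b)\bigr)=y-x$ with $|y-x|\le n-1<n$, so $y=x$, hence $g(a)=g(b)$ and $a=b$; thus $\phi$ is injective. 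If $g$ is optimal, i.e. $g(V(D))=[1,p]$, then $\phi$ takes exactly the values in $\bigcup_{i=1}^{p}\{n(i-1)+x:x\in[1,n]\}=[1,np]=\bigl[1,\,|V(D\otimes_h\mathcal{RS}_n)|\bigr]$, so $\phi$ is optimal as well. The induced label of an edge of the product is
\[
\bigl|\phi(a,x)-\phi(b,y)\bigr|=\bigl|\,n\bigl(g(a)-g(b)\bigr)+(x-y)\,\bigr|,
\]
where $(a,b)\in E(D)$ and $(x,y)\in E\bigl(h(a,b)\bigr)$; the rest of the proof is a count of how often each value of this expression occurs.

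Fix an arc $(a,b)\in E(D)$, write $t=|g(a)-g(b)|\ge 1$, and set $S=h(a,b)\in\mathcal{RS}_n$. The crucial observation is that, by Lemma \ref{coro: the unicity of $i-j=k$} together with the fact that $S$ is $1$-regular (hence has exactly $n$ arcs), the map $(x,y)\mapsto x-y$ is a bijection from $E(S)$ onto the symmetric interval $[-(n-1)/2,(n-1)/2]$. Since $nt\ge n>(n-1)/2\ge|x-y|$, the quantity $n\bigl(g(a)-g(b)\bigr)+(x-y)$ is never $0$, so no absolute value is ``active''; moreover, passing from $g(a)-g(b)=t$ to $g(a)-g(b)=-t$ merely replaces $x-y$ by $-(x-y)$, and by symmetry of the interval this does not change the set of absolute values obtained. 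Hence, as $(x,y)$ runs over $E(S)$, the $n$ induced labels $\bigl|\,n(g(a)-g(b))+(x-y)\,\bigr|$ are precisely the $n$ integers of the block
\[
B_t:=\Bigl[\,nt-\tfrac{n-1}{2},\; nt+\tfrac{n-1}{2}\,\Bigr],
\]
each occurring exactly once. In other words, every arc of $D$ whose endpoints have $g$-difference $t$ contributes, through the product, each element of $B_t$ exactly once.

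Finally, I sum over the arcs of $D$. For distinct positive integers $t\ne t'$ one has $|nt-nt'|\ge n>n-1$, so the blocks $B_t$ and $B_{t'}$ are disjoint; hence every induced label of $D\otimes_h\mathcal{RS}_n$ lies in a unique $B_t$. Because $g$ is $k$-equitable, for each value $t$ taken by $|g(u)-g(v)|$ on $E(D)$ there are exactly $k$ arcs of $D$ realizing that difference, and by the previous paragraph each of them contributes every element of $B_t$ once; consequently $\phi$ assigns each integer of $\bigcup_t B_t$ --- i.e. each value it takes on the edges of the product --- to exactly $k$ edges of $D\otimes_h\mathcal{RS}_n$. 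Thus $\phi$ is a $k$-equitable labeling of $\mathrm{und}(D\otimes_h\mathcal{RS}_n)$, and it is optimal whenever $g$ is, which proves the theorem.

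I expect the only genuine subtlety to be the one already isolated: Lemma \ref{coro: the unicity of $i-j=k$} must be used to extract not merely uniqueness but an honest bijection between $E(S)$ and a \emph{symmetric} interval of $n$ consecutive integers, and that symmetry is exactly what lets one ignore the orientation of the arcs of $D$ (so that $g$ being a labeling of the underlying graph suffices) and makes the $B_t$ fit together as disjoint windows of length $n$. If $D$ is allowed to carry loops, the vertex of a loop produces differences that do not have this clean form, and the multiplicity of the label $0$ --- whose value is $0$ by the loop convention --- must be checked by a separate direct inspection of the loop contributions.
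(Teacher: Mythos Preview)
The paper does not actually prove this theorem: it is quoted from \cite{LopMunRiu1} and only the statement is given, together with Remark \ref{remark: induced labeling product} recording the explicit labeling $(a,i)\mapsto n(a-1)+i$. Your argument is correct and is precisely the one implicit in that remark; indeed, the very computation you carry out (showing that the edge-differences coming from an arc with $g$-difference $t$ fill the block $[nt-(n-1)/2,\,nt+(n-1)/2]$) is reproduced verbatim inside the paper's proof of Theorem \ref{theo: c'-Langford from a previuos c-Langford}, so your approach coincides with what the authors use.
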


\begin{remark}\label{remark: induced labeling product}
As a key point in what follows, we want to mention how the $k$-equitable labeling of Teorem \ref{theo: product_k_equitable} is obtained from the labelings of the elements involved in it. If we assume that each vertex of $D$ is identified with the label assigned to it by a $k$-equitable labeling then, a vertex $(a,i)$ of $D\otimes_h\mathcal{RS}_n$ is labeled with $n(a-1)+i$.
\end{remark}

\section{Using SEM labelings of $(p,p)$-graphs and Skolem sequences to generate Langford sequences}
\label{section_Skolem_Langford}

The problem of counting Langford sequences has proven to be an interesting and challenging one. This section is devoted to introduce new techniques that will allow us to find lower bounds for the number of Langford sequences of certain order and defect $d$ for some particular values of $d$. We feel that the techniques introduced are unexpected since we use graph labelings and the $\otimes_h$-product. By using this procedure, we have a different way to understand the problem.

In the next theorem, we show that for every Langford sequence $L$ of order $m$ and defect $d$ we can construct a new Langford sequence $L'$ of order $mn$ and defect $nd-(n-1)/2$. We believe that the proof that we provide is remarcable since it is a constructive technique to obtain such sequences. It is precisely this constructive technique that allows us to obtain lower bounds for the number of distinct Langford sequences for certain orders and defects (see Theorem \ref{theo: lower bound c'-Langford from c-Langford} and Corollary \ref{coro: bound_c_langford_from_skolem}).

\begin{theorem}\label{theo: c'-Langford from a previuos c-Langford}
Let $L=(l_1,l_2,\ldots, l_{2m})$ be a Langford sequence of order $m$ and defect $d$. Then, there exists a Langford sequence $L'$ of order $mn$ and defect $d'$, where $d'=nd-(n-1)/2$, for each odd integer $n$.
\end{theorem}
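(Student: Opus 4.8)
The plan is to reinterpret $L$ as an optimal $1$-equitable labeling of the directed matching $m\overrightarrow{K_2}$ whose induced edge differences form the interval $[d,d+m-1]$, to form its $\otimes_h$-product with a suitable family of rotation super edge-magic digraphs of order $n$, and to read $L'$ off the labeling of the resulting digraph. All the structural machinery needed is already available in Section \ref{section: the tools}; the only genuine content of the proof is a single computation of a difference set, and the hypotheses that $n$ is odd and that $d'=nd-(n-1)/2$ are exactly what make that computation close up.

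First I would let $g$ be the Langford labeling of $m\overrightarrow{K_2}$ associated with $L$; as recalled in Section \ref{section_Skolem_Langford}, $g$ is an optimal $1$-equitable labeling of $m\overrightarrow{K_2}$ with $\{g(v)-g(u):(u,v)\in E(m\overrightarrow{K_2})\}=[d,d+m-1]$, so, identifying each vertex with its $g$-label, every arc has the form $(a,b)$ with $b-a\in[d,d+m-1]$, each value occurring exactly once. Since $n$ is odd the cycle $C_n$ is super edge-magic, hence $\mathcal{S}_n\neq\emptyset$ and $\mathcal{RS}_n\neq\emptyset$; fix any function $h:E(m\overrightarrow{K_2})\to\mathcal{RS}_n$. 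As $m\overrightarrow{K_2}$ is acyclic and $\mathcal{RS}_n\subseteq\Sigma_n$, Theorem \ref{theo_copies_of_forest} gives $m\overrightarrow{K_2}\otimes_h\mathcal{RS}_n\cong(mn)\overrightarrow{K_2}$, while Theorem \ref{theo: product_k_equitable} shows this product is optimal $1$-equitable and, by Remark \ref{remark: induced labeling product}, carries the labeling sending the vertex $(a,i)$ to $n(a-1)+i$; in particular its vertex labels run over $[1,2mn]$. It then remains to prove that the set of induced edge differences of this labeling is an interval of $mn$ integers whose minimum is $d'$.

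Next I would compute those differences. An arc of the product has the form $((a,x),(b,y))$ with $(a,b)\in E(m\overrightarrow{K_2})$ and $(x,y)\in E(h((a,b)))$; writing $k=b-a$, its endpoints carry the labels $n(a-1)+x$ and $n(b-1)+y$, so the induced difference equals $nk+(y-x)$. As $(a,b)$ ranges over $E(m\overrightarrow{K_2})$, the value $k$ ranges exactly once over $[d,d+m-1]$. For a fixed such arc set $S=h((a,b))\in\mathcal{RS}_n$: by Lemma \ref{coro: the unicity of $i-j=k$}, for each integer $t$ with $|t|\le(n-1)/2$ there is a unique arc $(i,j)$ of $S$ with $i-j=t$, and since $S$ has exactly $n$ arcs (being a rotation of a $1$-regular digraph of order $n$) while there are exactly $n$ such values of $t$ (here the oddness of $n$ is used), the set $\{i-j:(i,j)\in E(S)\}$, and hence also $\{y-x:(x,y)\in E(S)\}$, equals $\{-(n-1)/2,\ldots,(n-1)/2\}$. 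Therefore the differences contributed by the $n$ arcs of the product lying over a fixed $(a,b)$ with $b-a=k$ are exactly the $n$ consecutive integers of $[\,nk-(n-1)/2,\ nk+(n-1)/2\,]$.

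Finally I would assemble these blocks. Because $(n-1)/2+1=(n+1)/2$, the block for $k$ ends at $nk+(n-1)/2$ and the block for $k+1$ begins at $n(k+1)-(n-1)/2=nk+(n+1)/2$, so consecutive blocks abut with neither gap nor overlap; letting $k$ run over $[d,d+m-1]$ shows that the set of all induced differences is the interval $[\,nd-(n-1)/2,\ n(d+m-1)+(n-1)/2\,]$, which contains exactly $mn$ integers (count: $n(m-1)+(n-1)+1=mn$) and has minimum $d'=nd-(n-1)/2$; moreover $d'$ is a positive integer, since $(n-1)/2\in\mathbb{Z}$ and $nd-(n-1)/2\ge n-(n-1)/2=(n+1)/2\ge1$ when $d\ge1$. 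Since an optimal $1$-equitable labeling of a directed matching whose induced differences are consecutive yields a Langford sequence of defect equal to the minimum of that set, this produces a Langford sequence $L'$ of order $mn$ and defect $d'$. I expect the one place requiring care to be precisely this last bookkeeping — verifying that the blocks tile the target interval without gaps or repetitions — but it is routine once the difference set of an element of $\mathcal{RS}_n$ has been identified through Lemma \ref{coro: the unicity of $i-j=k$}; everything else is a direct application of Theorems \ref{theo_copies_of_forest} and \ref{theo: product_k_equitable} together with Remark \ref{remark: induced labeling product}.
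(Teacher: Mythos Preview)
Your proof is correct and follows essentially the same route as the paper: pass from $L$ to the Langford labeling of $m\overrightarrow{K_2}$, form the $\otimes_h$-product with $\mathcal{RS}_n$, invoke Theorems~\ref{theo_copies_of_forest} and~\ref{theo: product_k_equitable} together with Remark~\ref{remark: induced labeling product}, and use Lemma~\ref{coro: the unicity of $i-j=k$} to compute the induced difference set as $[nd-(n-1)/2,\,nd-(n-1)/2+mn-1]$. Your write-up is in fact slightly more explicit than the paper's in justifying that the $n$ blocks tile the target interval without gaps or overlaps and that $d'$ is a positive integer, but the argument is the same.
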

\begin{proof}
Let $L=(l_1,l_2,\ldots, l_{2m})$ be a Langford sequence of order $m$ and defect $d$ and consider the Langford labeling of $D=m\overrightarrow{K_2}$ induced by $L$, where the vertices of $m\overrightarrow{K_2}$ are identified by the labels. Clearly, $\mathcal{RS}_n\subset \Sigma_n$. By Theorem \ref{theo_copies_of_forest}, if we consider any function $h:E(D)\rightarrow \mathcal{RS}_n$, then $D'=D\otimes_h\mathcal{RS}_n\cong (mn)\overrightarrow{K_2}$. Moreover, since a Langford labeling of $m\overrightarrow{K_2}$ is an optimal $1$-equitable labeling of $D$, by Theorem \ref{theo: product_k_equitable} and Remark \ref{remark: induced labeling product}, the induced labeling of $(mn)\overrightarrow{K_2}$, namely $g$, is optimal $1$-equitable. What remains to be proven is that this labeling $g$ is in fact a Langford labeling of $(mn)\overrightarrow{K_2}$ with induced differences in $[d',d'+mn-1]$. That is, the set of induced edge differences is $\{|g(y)-g(x)|: \ (x,y)\in D'\}=[d',d'+mn-1],$ where $d'=nd-(n-1)/2$. Suppose that $(u,v)$ is an arc in $D$. Then, for every $(i,j)\in h((u,v))$, $((u,i),(v,j))\in E(D')$. By Lemma \ref{coro: the unicity of $i-j=k$}, for every integer $k$ with $|k|\le (n-1)/2$ there exists a unique arc $(i,j)\in h((u,v))$ such that $i-j=k$. Thus, for every $(u,v)\in E(D)$, we have that the set $S_{(u,v)}$ of induced edge differences of the arcs obtained from $(u,v)$ is the following one:
\begin{eqnarray*}
% \nonumber to remove numbering (before each equation)
  S_{(u,v)}&=&\{|g((u,i))-g((v,j))|: (i,j)\in E(h((u,v))\} \\
  &=&  \{|n(u-1)+i-n(v-1)-j|: (i,j)\in E(h((u,v))\} \\
  &=&[n(v-u)-(n-1)/2, n(v-u)+(n-1)/2].
\end{eqnarray*}
Hence, the set $S=\cup_{(u,v)\in E(D)}S_{(u,v)}$ is $S=[nd-(n-1)/2,nd-(n-1)/2+mn-1]$. Therefore, we obtain a  Langford labeling of $D'$, from which we can recover a Langford sequence of order $mn$ and defect $d'$.\qed
\end{proof}

\begin{example}
Consider Example \ref{Example_1: product of a matching by rotated}. The digraph $D$ is in fact, a Skolem labeling of $4\overrightarrow{K_2}$, and $\Gamma$ is the family $\mathcal{RS}_n$, for $n=3$. Thus, by replacing each vertex $(a,x)$ of $D\otimes_h\Gamma$ by $n(a-1)+x$, we obtain a Langford labeling of order $12$ and defect $d'=2$. This labeling is shown in Fig. \ref{Fig_3}
\begin{figure}[h]
  \centering
  % Requires \usepackage{graphicx}
  \includegraphics[width=337pt]{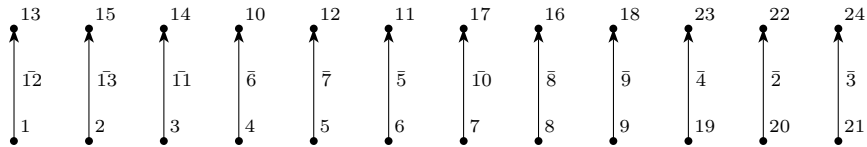}\\
  \caption{A Langford labeling of $12\protect\overrightarrow{K_2}$.}\label{Fig_3}
\end{figure}

Thus, the associated Langford sequence of order $12$ and defect $2$ is:
$$(12,13,11,6,7,5,10,8,9,6,5,7,12,11,13,8,10,9,4,2,3,2,4,3).$$
\end{example}

Denote by $\lambda_n^d$ the number of Langford sequences of order $n$ and defect $d$.

\begin{theorem}\label{theo: lower bound c'-Langford from c-Langford}
Let $m$ and $n$ be two positive integers, $n$ odd. Then,
$$\lambda_{mn}^{d'}\ge |\mathcal{S}_n|^m\lambda_{m}^{d},$$
where $d'=nd-(n-1)/2$.
\end{theorem}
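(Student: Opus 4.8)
The plan is to combine the constructive procedure of Theorem \ref{theo: c'-Langford from a previuos c-Langford} with a counting argument that shows that distinct inputs produce distinct outputs. Fix a Langford sequence $L$ of order $m$ and defect $d$, and identify it with its Langford labeling of $D=m\overrightarrow{K_2}$, where vertices carry their labels. There are $2m$ arcs in $D$, but a function $h:E(D)\rightarrow \mathcal{RS}_n$ is what the construction uses; since $|\mathcal{RS}_n|=|\mathcal{S}_n|$ (rotation is a bijection between $\mathcal{S}_n$ and $\mathcal{RS}_n$), I would instead count the data $(L,h)$ where $L$ ranges over the $\lambda_m^d$ Langford sequences of order $m$ and defect $d$ and $h$ ranges over the $|\mathcal{S}_n|^{m}$ functions from the $m$ \emph{edges} (not arcs) of the matching to $\mathcal{RS}_n$. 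Here one must be slightly careful: $m\overrightarrow{K_2}$ has $m$ connected components, each a single arc, so a function on the arc set has exactly $|\mathcal{RS}_n|^m=|\mathcal{S}_n|^m$ choices, matching the exponent in the statement.

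First I would show each such pair $(L,h)$ yields, via Theorem \ref{theo: c'-Langford from a previuos c-Langford}, a Langford sequence $L'=L'(L,h)$ of order $mn$ and defect $d'=nd-(n-1)/2$; this is immediate from the earlier theorem. The substance is the injectivity of the map $(L,h)\mapsto L'$. For this I would argue that from $L'$ — equivalently from the induced Langford labeling $g$ of $D'=(mn)\overrightarrow{K_2}$ on vertex set $V(D)\times[1,n]$ — one can recover both $L$ and $h$. The key is Remark \ref{remark: induced labeling product}: the vertex $(a,i)$ of $D'$ carries label $n(a-1)+i$, so the map $(a,i)\mapsto (a,i)$ is reconstructible from the integer label by writing it in the form $n(a-1)+i$ with $i\in[1,n]$; that is, $a=\lceil \ell/n\rceil$ and $i=\ell-n(a-1)$ for a label $\ell$. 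Hence the partition of $V(D')$ into the $n$-element "fibres" $\{a\}\times[1,n]$ is intrinsic to $L'$. Now each arc $((u,i),(v,j))$ of $D'$ projects to an arc $(u,v)$ of $D$; collecting, for each ordered pair of fibres $(a,b)$, the set of arcs of $D'$ between fibre $a$ and fibre $b$ recovers exactly whether $(u,v)\in E(D)$, hence recovers $D$ with its labeling, i.e. recovers $L$. And once $(u,v)\in E(D)$ is known, the set $\{(i,j):((u,i),(v,j))\in E(D')\}$ is by definition $E(h((u,v)))$, so $h$ is recovered arc by arc.

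The main obstacle I expect is making the reconstruction of $h$ airtight: one must be sure that the arcs of $D'$ lying over a fixed arc $(u,v)$ of $D$ are \emph{only} those coming from $h((u,v))$ and not confused with arcs over some other arc $(u',v')$ with $u'=u,v'=v$. But in $m\overrightarrow{K_2}$ distinct arcs involve disjoint vertex pairs (the matching has no repeated vertices), so the ordered pair $(u,v)$ determines the arc of $D$ uniquely, and there is no ambiguity; this is exactly why the exponent is $m$ and not $2m$. A secondary point to verify is that $\mathcal{RS}_n\subseteq\Sigma_n$ and that rotation $\mathcal{S}_n\to\mathcal{RS}_n$, $A\mapsto A^R$, is injective (indeed bijective) on the level of labeled digraphs, so that the $|\mathcal{S}_n|^m$ functions into $\mathcal{S}_n$ correspond bijectively to the functions into $\mathcal{RS}_n$ used in the construction; this follows because $A\mapsto A^R$ is an invertible operation on matrices. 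Putting these together, the map $(L,h)\mapsto L'$ is injective, its domain has size $|\mathcal{S}_n|^m\,\lambda_m^d$, and its image consists of Langford sequences of order $mn$ and defect $d'$, which gives $\lambda_{mn}^{d'}\ge|\mathcal{S}_n|^m\lambda_m^d$.
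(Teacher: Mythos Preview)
Your argument is correct and follows the same overall strategy as the paper: both show that the construction of Theorem~\ref{theo: c'-Langford from a previuos c-Langford} defines an injective map $(L,h)\mapsto L'$ from pairs (Langford sequence of order $m$ and defect $d$, function $h:E(D)\to\mathcal{RS}_n$) to Langford sequences of order $mn$ and defect $d'$, and then count the domain. The difference is in how injectivity is established. The paper argues by direct distinction in two cases: (i) if $L_1\neq L_2$ it locates the first position $t$ where the sequences differ and shows that the vertex with label $n(t-1)+i$ is adjacent in $D_1'$ to a vertex in a different fibre than in $D_2'$; (ii) if $L_1=L_2$ but $h_1\neq h_2$ it picks an arc $(u,v)$ on which they disagree and exhibits an adjacency present in one product but not the other. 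You instead build an explicit left inverse: from the labels of $L'$ you recover the fibre decomposition via $a=\lceil\ell/n\rceil$, $i=\ell-n(a-1)$, then project arcs to recover $D$ (hence $L$), and finally read off $E(h((u,v)))$ arc by arc. Your reconstruction approach is slightly more conceptual and makes the role of the labeling formula in Remark~\ref{remark: induced labeling product} transparent; the paper's approach is more elementary and avoids any appeal to invertibility of the vertex-labeling map. One small slip: early on you write ``There are $2m$ arcs in $D$'', but $D=m\overrightarrow{K_2}$ has $2m$ vertices and only $m$ arcs, as you correctly use later when computing $|\mathcal{RS}_n|^m$.
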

\begin{proof}
By definition, it is clear that $| \mathcal{RS}_n|=| \mathcal{S}_n|$.
Thus, according to the construction shown in the proof of Theorem \ref{theo: c'-Langford from a previuos c-Langford}, it is enough to show that (i) every pair $L_1$ and $L_2$ of different Langford sequences of order $m$ and defect $d$ will produce a pair $L_1'$ and $L_2'$ of different Langford sequences of order $mn$ and defect $d'$, where $d'=nd-(n-1)/2$, and (ii), for a fix Langford sequence $L$ of defect $d$, if we consider two different functions $h_1,h_2:E(D)\rightarrow \mathcal{RS}_n$, where $D$ is the labeled digraph associated to $L$, of size $m$, then the resulting Langford sequences $L_1'$ and $L_2'$ of defect $d'$, are different.
Let us prove (i). Suppose that $D_1$ and $D_2$ are the digraphs associated to $L_1=(l_1^1,l_2^1,\ldots,l_{2m}^1)$ and $L_2=(l_1^2,l_2^2,\ldots,l_{2m}^2)$, respectively. Let $t$ be the minimum $r\in [1,2m]$ such that $l_r^1\ne l_r^2$. Assume that $l_t^1=k$. Then, $k\notin \{l_1^1,l_2^1,\ldots,l_{t-1}^1\}$. Otherwise, we get $l_t^1= l_t^2=k$, that is a contradiction. Thus, there exists
an arc $(t,v_1)\in D_1$ such that $v_1-t=k$, and $(t,v_1)\notin D_2$. Hence, there exists $v_2\in [1,2m]$ such that $(t,v_2)\in E(D_2)$ and $v_2\neq v_1$. Then, for every $i\in [1,n]$, $n(t-1)+i$ is adjacent to exactly one vertex $\{n(v_1-1)+j: \ j\in [1,n]\}$ in $D_1'$ and to exactly one vertex $\{n(v_2-1)+j: \ j\in [1,n]\}$ in $D_2'$. Therefore, the induced $L_1'$ and $L_2'$ Langford sequences of defect $d'$ are different.
Let us prove (ii). Let $L$ be a Langford sequence of defect $d$ and consider two different functions $h_1,h_2:E(D)\rightarrow \mathcal{RS}_n$, where $D$ is the labeled digraph associated to $L$. Then, there exists $(u,v)$ in $D$, such that $h_1((u,v))\neq h_2((u,v))$. Thus, there exists $(i,j)\in E(h_1((u,v)))\setminus E(h_2((u,v)))$. Hence, $n(u-1)+i$ is adjacent to $n(v-1)+j$ in $D_1'$, and is not in $D_2'$. Therefore, the associated Langford sequences of defect $d'$ are different and the result follows.\qed
\end{proof}

In particular, if we use the lower bound given in Theorem \ref{theo_lower_bound_Skolem}, we obtain the next result.
\begin{corollary}\label{coro: bound_c_langford_from_skolem}
Let $m$ and $n$ be two positive integers such that $m\equiv 0$ or $1$ (mod $4$) with $n$ odd. Then,
$$\lambda_{mn}^{(n+1)/2}\ge |\mathcal{S}_n|^m2^{\lfloor m/3\rfloor}.$$
\end{corollary}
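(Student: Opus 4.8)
The plan is to obtain Corollary \ref{coro: bound_c_langford_from_skolem} as an immediate specialization of Theorem \ref{theo: lower bound c'-Langford from c-Langford} to defect $d=1$. The bridge between the two statements is the elementary fact, already noted in the introduction, that a Langford sequence of order $m$ and defect $1$ is precisely a Skolem sequence of order $m$; consequently $\lambda_m^1=\sigma_m$.

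First I would set $d=1$ in Theorem \ref{theo: lower bound c'-Langford from c-Langford}. The defect produced by the construction is then $d'=nd-(n-1)/2=n-(n-1)/2=(n+1)/2$, which is a genuine positive integer exactly because $n$ is odd; this is where the hypothesis ``$n$ odd'' enters and makes the left-hand side of the claimed inequality meaningful. Theorem \ref{theo: lower bound c'-Langford from c-Langford} then gives
\[
\lambda_{mn}^{(n+1)/2}\ge |\mathcal{S}_n|^m\,\lambda_m^1=|\mathcal{S}_n|^m\,\sigma_m .
\]

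Next I would invoke the hypothesis $m\equiv 0$ or $1\pmod 4$, which guarantees both that Skolem sequences of order $m$ exist --- so the count $\sigma_m$ is nonzero and the construction underlying Theorem \ref{theo: lower bound c'-Langford from c-Langford} has an input --- and that Theorem \ref{theo_lower_bound_Skolem} applies, yielding $\sigma_m\ge 2^{\lfloor m/3\rfloor}$. Combining this with the displayed inequality produces $\lambda_{mn}^{(n+1)/2}\ge |\mathcal{S}_n|^m 2^{\lfloor m/3\rfloor}$, which is the assertion. There is no genuine obstacle here: the whole argument is a short chain of inequalities, and the only points needing a word of care are the arithmetic identity $nd-(n-1)/2=(n+1)/2$ for $d=1$ together with its positivity for odd $n$, and the identification $\lambda_m^1=\sigma_m$.
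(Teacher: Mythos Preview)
Your argument is correct and follows precisely the paper's own approach: specialize Theorem \ref{theo: lower bound c'-Langford from c-Langford} to $d=1$, use the identification $\lambda_m^1=\sigma_m$ noted in the introduction, and then apply Abrham's bound from Theorem \ref{theo_lower_bound_Skolem}.
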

\begin{proof}
A Langford sequence of defect $d=1$ is a Skolem sequence. Thus, the result follows from Theorem \ref{theo: lower bound c'-Langford from c-Langford} and Theorem \ref{theo_lower_bound_Skolem}.\qed
\end{proof}
\begin{remark}
Let $m$ and $n$ be two positive integers such that $m\equiv 0$ or $1$ (mod $4$) and let $n$ be odd. Since every cycle admits two possible strong orientations, by Theorem \ref{theo:numberSEMofCICLES}, the bound presented in Corollary \ref{coro: bound_c_langford_from_skolem} implies that
\begin{eqnarray*}
% \nonumber to remove numbering (before each equation)
 \lambda_{mn}^{(n+1)/2} &\ge& (\frac52 \cdot 2^{\lfloor (n-1)/3\rfloor}+2)^m2^{\lfloor m/3\rfloor}.
\end{eqnarray*}
\end{remark}
%\subsection{$c$-Langford graceful graphs}
%In \cite{LeeShe}, Lee and Shee call a $(p,q)$-graph $G$ {\it Skolem-graceful} if there is an injection $f$ from the vertex set of $G$ to $[1,p]$ such that the edge labels induced by $|f(u)-f(v)|$ for every $uv\in E(G)$ are the numbers in the set $[1,q]$. The work conducted in this paper together with the concept of Skolem-graceful, motivates us to introduce the concept of $c$-Langford-graceful graph.

%%A $(p,q)$-graph is {\it $c$-Langford-graceful} if there is an injection $f:V(G)\rightarrow [1,p]$ such that the edge labels induced by $|f(u)-f(v)|$ for every $uv\in E(G)$ are the numbers in the set $[c,c+q-1]$.

\section{Hooked and extended Skolem sequences}
\label{section_Hooked_Skolem_Langford}
In this section, we consider the problem of generating extended Skolem sequences using hooked and extended Skolem sequences
together with the ideas already developed in the paper.

The fact that there is a position occupied by zero, namely $i$, in the hooked (or extended) Skolem sequences can be
interpreted as position $i$ being occupied by two zeros and hence, we have $|i-i|=0$. In fact, we believe that this is a natural way to understand this type of sequences. Nevertheless, this idea presents some technical difficulties when we pretend to apply the ideas developed so far. For instance, the sequence is no longer related with a $1$-equitable labeling of the digraph $m\overrightarrow{K_2}$ but to a $1$-equitable labeling of the digraph $m\overrightarrow{K_2}\cup \overrightarrow{L},$ where $\overrightarrow{L}$ is a loop. Observe that, in this case, the $1$-equitable labeling is not optimal.
Next, we present what has been said up to this point in a formal way.

For any, hooked or extended Skolem sequence of order $m$, we define an extended Skolem labeling $g$ of the digraph $m\overrightarrow{K_2}\cup \overrightarrow{L}$, up to isomorphism, as follows. Let $f: E(m\overrightarrow{K_2}\cup \overrightarrow{L})\rightarrow [0,m]$ be a bijective function that assigns $0$ to the loop. If $f(u,v)=k$, where $k\neq 0$, then the extended Skolem labeling $g$ assigns to $u$ the smallest position occupied by $k$ and to $v$ the other one. Furthermore, $g$ assigns to the loop vertex the position occupied by zero. See an example in Fig. \ref{Fig_4}.

%\begin{example}
%Consider the  hooked Skolem sequence $(1,1,2,0,2)$. Then, the corresponding extended Skolem labeling of the digraph  $2\overrightarrow{K_2}\cup \overrightarrow{L}$ is shown in Fig. \ref{Fig_40}.

\begin{figure}[ht]
  \centering
  % Requires \usepackage{graphicx}
  \includegraphics[width=93pt]{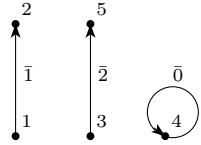}\\
  \caption{The Skolem labeling of $2\protect\overrightarrow{K_2}\cup \protect\overrightarrow{L}$ associated to  $(1,1,2,0,2)$.}\label{Fig_4}
\end{figure}

%\end{example}

The following lemma will be useful.
\begin{lemma}\label{lemma: Rotated_canonic}
Let $n$ be odd. Then the digraph formed by a loop and the union of cyclic oriented digons is a rotation super edge-magic digraph of order $n$.
\end{lemma}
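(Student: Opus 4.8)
The plan is to produce an explicit member of $\mathcal{S}_n$ whose rotation is exactly a loop together with a disjoint union of digons. Write $n=2k+1$, and let $D$ be the directed cycle on the vertex set $[1,n]$, each vertex carrying its own label, with arc set $\{(c,\sigma(c)):c\in[1,n]\}$, where $\sigma$ is the classical ``zig-zag'' permutation given by $\sigma(a)=a+k+1$ for $a\in[1,k]$, $\sigma(k+1)=1$, $\sigma(b)=b-k$ for $b\in[k+2,2k]$, and $\sigma(2k+1)=k+1$. A routine check shows that $\sigma$ is a single $n$-cycle, so $D$ is $1$-regular and $\mathrm{und}(D)=C_n$ is simple, and that the set of sums $\{c+\sigma(c):c\in[1,n]\}$ equals the interval $[k+2,3k+2]$, a set of $n$ consecutive integers. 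By Lemma \ref{lemma_SEM_consecutives_sums} the identity map on $[1,n]$ then extends to a super edge-magic labeling of $D$, so $D\in\mathcal{S}_n$.

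Next I would push the rotation through this description. Let $S$ be the digraph whose adjacency matrix is $A(D)^R$; by the definition of the rotation, $(i,j)\in E(S)$ if and only if $A(D)_{n+1-j,i}=1$, i.e.\ if and only if $\sigma(n+1-j)=i$, i.e.\ $j=n+1-\sigma^{-1}(i)$. Thus $S$ is the digraph with arc set $\{(i,\tau(i)):i\in[1,n]\}$, where $\tau=\phi\circ\sigma^{-1}$ and $\phi(x)=n+1-x$. Inverting $\sigma$ and composing, one finds that $\tau$ interchanges $1$ with $k+1$, interchanges $d$ with $k+2-d$ on $[2,k]$, and interchanges $c$ with $3k+3-c$ on $[k+2,2k+1]$; in particular $\tau$ is an involution with exactly one fixed point (namely $(k+2)/2$ if $k$ is even and $(3k+3)/2$ if $k$ is odd), while its remaining $2k$ elements split into $k$ transpositions. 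Hence $S$ is a single loop (at the fixed point of $\tau$) together with $k=(n-1)/2$ cyclic oriented digons, one for each transposition of $\tau$. Since $S$ is the rotation of $D\in\mathcal{S}_n$, this shows $S\in\mathcal{RS}_n$, which is exactly the assertion.

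The only step that needs genuine care is this last computation: checking that $\tau=\phi\circ\sigma^{-1}$ is an involution with a single fixed point. This is equivalent to the identity $\{\tau(e)-e:e\in[1,n]\}=\{-k,-k+1,\dots,k\}$, i.e.\ to the fact that the transpositions of $\tau$ realise each gap $1,2,\dots,k$ exactly once — a Skolem-type pattern hidden inside the zig-zag labeling. The cleanest way to carry out the verification is to split on the parity of $k$, since the location of the unique fixed point depends on it; everything else is bookkeeping. (One could also run the argument in reverse, starting from an involution $\tau$ with one fixed point and displacement set $\{-k,\dots,k\}$ and putting $\sigma=\tau\circ\phi$; but then one must still check that $\sigma$ has no $2$-cycle, so that $\mathrm{und}(D)$ is simple and Lemma \ref{lemma_SEM_consecutives_sums} applies as stated.)
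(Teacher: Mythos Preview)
Your argument is correct and, at its core, is the same construction as the paper's: the permutation $\sigma$ you write down is exactly the Kotzig--Rosa super edge-magic labeling of $C_n$ that the paper quotes (relabel $v_i$ by $f(v_i)$ and the cycle $1,k+2,2,k+3,\dots,k,2k+1,k+1$ appears). The difference is one of detail: the paper simply states that the rotations $R_1,R_2$ of the two strong orientations ``meet the required conditions'' and stops, whereas you actually compute the rotation as the involution $\tau=\phi\circ\sigma^{-1}$ and verify it has a single fixed point. Your version is strictly more informative; the only thing to add, if you want to match the paper exactly, is the remark that taking the other strong orientation of $C_n$ gives a second element $R_2\in\mathcal{RS}_n$ of the same shape (this is used later in Theorem~\ref{theo: lower bound extended Skolem from extended_Skolem}).
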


\begin{proof}
Let $C_n$ be the graph with vertex set $V(C_n)=\{v_i\}_{i=1}^n$ and set of edges $E(C_n)=\{v_iv_{i+1}\}_{i=1}^{n-1}\cup \{v_1v_n\}.$ Consider the super edge-magic labeling $f$ of $C_n$, $n$ odd, (see \cite{KotRos70}) defined by:
$$
   f(v_i)=\left\{\begin{array}{ll}
                   (i+1)/2, & i \ \hbox{odd}, \\
                   (n+i+1)/2, & i \ \hbox{even}.
                 \end{array}\right.
$$
Let $S_1$ and $S_2$ be the two possible strong orientations obtained from the induced the super edge-magic labeled graph. Then, the two rotations $R_1$ and $R_2$ meet the required conditions.\qed
\end{proof}

\begin{figure}[h]
  \centering
  % Requires \usepackage{graphicx}
  \includegraphics[width=178pt]{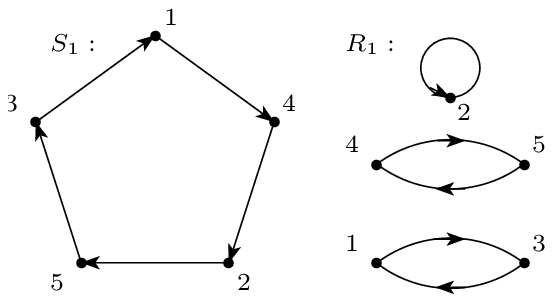} \includegraphics[width=178pt]{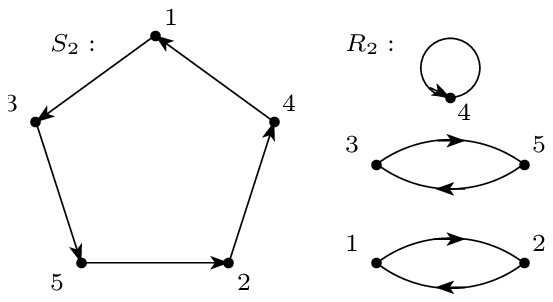}\\
  \caption{The digraphs $S_1$, $S_2$, $R_1$ and $R_2$ introduced in Lemma \ref{lemma: Rotated_canonic}, for $n=5$.}\label{Fig_5}
\end{figure}

\begin{theorem}\label{theo: Extended-Skolem from a previuos Extended-Skolem}
Let $(s_1,s_2,\ldots, s_{2m+1})$ be a hooked or extended Skolem sequence of order $m$. Then, there exists an extended sequence $L'$ of order $mn+(n-1)/2$, for each odd integer $n$.
\end{theorem}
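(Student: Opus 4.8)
The plan is to mimic the construction used in the proof of Theorem~\ref{theo: c'-Langford from a previuos c-Langford}, but now applied to the digraph $D=m\overrightarrow{K_2}\cup\overrightarrow{L}$ equipped with the extended Skolem labeling induced by $(s_1,\ldots,s_{2m+1})$, which is a (non-optimal) $1$-equitable labeling of $D$ in which the vertices are identified with their labels in $[1,2m+1]$ and the loop vertex carries the position $i$ of the zero. First I would fix an odd integer $n$ and choose $h:E(D)\to\mathcal{RS}_n$; here Lemma~\ref{lemma: Rotated_canonic} is the crucial new ingredient, because it supplies a digraph in $\mathcal{RS}_n$ consisting of a loop together with cyclically oriented digons, which is exactly the shape needed so that the product behaves well on the loop edge. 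On the non-loop arcs the computation is identical to Theorem~\ref{theo: c'-Langford from a previuos c-Langford}: for each arc $(u,v)\in E(m\overrightarrow{K_2})$ the set of induced differences $S_{(u,v)}$ is the length-$n$ interval $[n(v-u)-(n-1)/2,\,n(v-u)+(n-1)/2]$, and as $(u,v)$ ranges over the $m$ matching arcs these intervals tile $[d',d'+mn-1]$ with $d'=(n+1)/2$ (taking $d=1$, since an extended Skolem sequence has defect $1$ on its non-zero part).

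Next I would analyse what the product does to the loop. By Theorem~\ref{theo_copies_of_forest}-type reasoning (or directly from the $\otimes_h$-product definition), replacing the loop edge at vertex $i$ by the digraph of Lemma~\ref{lemma: Rotated_canonic} produces, among the $n$ vertices $\{n(i-1)+1,\ldots,ni\}$, one loop and a union of digons. Each digon is a pair of arcs between two vertices $n(i-1)+a$ and $n(i-1)+b$, contributing a difference $|a-b|$; by Lemma~\ref{coro: the unicity of $i-j=k$}, as the digons of a rotation-SEM digraph realise each value $i-j=k$ with $|k|\le(n-1)/2$ exactly once, the $(n-1)/2$ digons give precisely the differences $1,2,\ldots,(n-1)/2$, each twice, i.e. they behave like $(n-1)/2$ matching edges of a Skolem-type sequence with these small defects, and the single loop becomes the unique zero of the new sequence. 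After relabelling every vertex $(a,x)$ of $D\otimes_h\mathcal{RS}_n$ by $n(a-1)+x$ (Remark~\ref{remark: induced labeling product}), the vertex set is $[1,n(2m+1)]=[1,2(mn+(n-1)/2)+1]$, there is exactly one loop, and the induced edge-differences are $\{0\}\cup[1,(n-1)/2]\cup[d',d'+mn-1]$ with $d'=(n+1)/2$; since $[1,(n-1)/2]\cup[(n+1)/2,\ldots]$ is consecutive, the whole positive part of the difference set is $[1,mn+(n-1)/2]$. Hence the resulting $1$-equitable labeling of $(mn+(n-1)/2)\overrightarrow{K_2}\cup\overrightarrow{L}$ is an extended Skolem labeling, and reading off positions yields the desired extended Skolem sequence $L'$ of order $mn+(n-1)/2$.

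The main obstacle I expect is bookkeeping around the loop: one must check that the $\otimes_h$-product of a loop-edge with the loop-plus-digons digraph really yields a disjoint union of a single loop and $(n-1)/2$ digons on the fibre $\{n(i-1)+1,\ldots,ni\}$ with no stray arcs, and that the arcs coming from the loop fibre do not overlap in difference-value with the arcs coming from the matching fibres. The first point follows because the adjacency matrix of the loop-edge has a single $1$ on the diagonal, so that $1$-entry is replaced by the adjacency matrix $A$ of the Lemma~\ref{lemma: Rotated_canonic} digraph and the fibre subdigraph is exactly that digraph; the second point follows from the interval computations above, since the digon-differences lie in $[1,(n-1)/2]$ while the matching-differences lie in $[(n+1)/2,\,mn+(n-1)/2]$, and these are disjoint and jointly consecutive. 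One should also remark that, exactly as in Theorem~\ref{theo: lower bound c'-Langford from c-Langford}, distinct choices of starting sequence or of $h$ give distinct $L'$, but since the statement only asserts existence it suffices to exhibit one such $L'$.
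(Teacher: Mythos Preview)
Your proposal is correct and follows essentially the same route as the paper: form $D=m\overrightarrow{K_2}\cup\overrightarrow{L}$, take the $\otimes_h$-product with $\mathcal{RS}_n$, assign to the loop one of the special digraphs of Lemma~\ref{lemma: Rotated_canonic}, compute the interval of differences on the matching part exactly as in Theorem~\ref{theo: c'-Langford from a previuos c-Langford}, and handle the loop fibre separately. The only cosmetic difference is that the paper phrases the digon step as ``removing the negative arc from every digon'' to obtain $R'\cong((n-1)/2)\overrightarrow{K_2}\cup\overrightarrow{L}$ with difference set $[0,(n-1)/2]$, whereas you say the digons ``behave like $(n-1)/2$ matching edges''; these are the same operation, and your interval bookkeeping (disjointness and consecutiveness of $[1,(n-1)/2]$ and $[(n+1)/2,mn+(n-1)/2]$) is exactly what is needed.
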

\begin{proof}
Let $(s_1,s_2,\ldots, s_{2m+1})$ be either a hooked or an extended Skolem sequence of order $m$
 and consider the digraph $D=m\overrightarrow{K_2}\cup \overrightarrow{L}$ induced by the sequence, where the vertices of $m\overrightarrow{K_2}\cup \overrightarrow{L}$ are identified with the corresponding labels. By definition of the $\otimes_h$-product, for any function $h:E(D)\rightarrow \mathcal{RS}_n$, we have that $D\otimes_h\mathcal{RS}_n\cong (m\overrightarrow{K_2}\otimes_{h_1}\mathcal{RS}_n)\cup R$, where $h_1$ is the restriction of $h$ over $E(m\overrightarrow{K_2})$ and $R$ is the digraph of $\mathcal{RS}_n$ assigned to the loop $\overrightarrow{L}$. Assume that $R$ is either $R_1$ or $R_2$ introduced in the proof of Lemma \ref{lemma: Rotated_canonic}.
Clearly, $\mathcal{RS}_n\subset \Sigma_n$. By Theorem \ref{theo_copies_of_forest}, if we consider any function $h_1:E(m\overrightarrow{K_2})\rightarrow \mathcal{RS}_n$, then $m\overrightarrow{K_2}\otimes_h\mathcal{RS}_n\cong (mn)\overrightarrow{K_2}$.

Moreover, since an extended Skolem labeling of $m\overrightarrow{K_2}\cup \overrightarrow{L}$ induces a complete $1$-equitable labeling of $m\overrightarrow{K_2}$ with set of induced differences $[1,m]$, by Theorem \ref{theo: product_k_equitable} and Remark \ref{remark: induced labeling product}, the induced labeling of $(mn)\overrightarrow{K_2}$ is $1$-equitable. What remains to be proven is that from this labeling we can obtain an extended labeling $g$ of $(mn+(n-1)/2)\overrightarrow{K_2}\cup \overrightarrow{L}$ with induced differences in $[0,mn+(n-1)/2]$. That is, the set of induced edge differences is $\{|g(y)-g(x)|: \ (x,y)\in D'\}=[0,mn+(n-1)/2]].$  Suppose that $(u,v)$ is an arc in $D$. Then, for every $(i,j)\in h((u,v))$, $((u,i),(v,j))\in E(D')$. By Lemma \ref{coro: the unicity of $i-j=k$}, for every integer $k$ with $|k|\le (n-1)/2$ there exists a unique arc $(i,j)\in h((u,v))$ such that $i-j=k$. Thus, for every $(u,v)\in E(D)$, we have that the set $S_{(u,v)}$ of induced edge differences of the arcs obtained from $(u,v)$ is the following one:
\begin{eqnarray*}
% \nonumber to remove numbering (before each equation)
  S_{(u,v)}&=&\{|g((u,i))-g((v,j))|: (i,j)\in E(h((u,v))\} \\
  &=&  \{|n(u-1)+i-n(v-1)-j|: (i,j)\in E(h((u,v))\} \\
  &=&[n(v-u)-(n-1)/2, n(v-u)+(n-1)/2].
\end{eqnarray*}
Hence, the set $S=\cup_{(u,v)\in E(m\overrightarrow{K_2})\cup \overrightarrow{L}}S_{(u,v)}$ is $S=[-(n-1)/2,mn+(n-1)/2]$. Notice that, by Lemma \ref{coro: the unicity of $i-j=k$} the set of induced differences of the arcs in $R$ is the set $[-(n-1)/2,(n-1)/2]$. Hence, removing the negative arc from every digon, we obtain the digraph $R'\cong ((n-1)/2)\overrightarrow{K_2}\cup \overrightarrow{L}$, with set of induced differences $[0,(n-1)/2]$. Therefore, the resulting labeling of $(mn+(n-1)/2)\overrightarrow{K_2}\cup \overrightarrow{L}$ is an extended Skolem labeling from which we can recover an extended Skolem sequence of order $mn+(n-1)/2$.\qed
\end{proof}

\begin{example}
Let $S_3$ be the super edge-magic labeled digraph of order $5$ defined by $V(S_3)=[1,5]$ and $E(S_3)=\{ (1,5),(5,2),(2,3), (3,1),(4,4)\}$. By rotating its adjacency matrix $\pi/2$ radiants clockwise, we obtain the digraph with $V(R_3)=[1,5]$ and $E(R_3)=\{(1,3), (3,4),(4,2),(2,1),(5,5)\}$. Consider $\Gamma=\{RS_i\}_{i=1}^3$, where  $R_1$ and $R_2$ are the digraphs that appear in Fig. \ref{Fig_5}. Let $D$ the digraph that appears in Fig. \ref{Fig_4} and let $h: E(D)\rightarrow \Gamma$ be the function defined by $h((1,2))=R_3$, $h((3,5))=R_2$ and $h((4,4))=R_1$. Then, by replacing each vertex $(a,x)$ of $D\otimes_h\Gamma$ by $5(a-1)+x$, we obtain the extended Skolem labeling of $12\overrightarrow{K_2}\cup \overrightarrow{L}$ of Fig. \ref{Fig_6}. The induced extended Skolem sequence is:
\begin{equation*}
    (7,4,6,3,5,4,3,7,6,5,11,9,12,10,8,2,0,2,1,1,9,11,8,10,12).
\end{equation*}

\begin{figure}[h]
  \centering
  % Requires \usepackage{graphicx}
  \includegraphics[width=357pt]{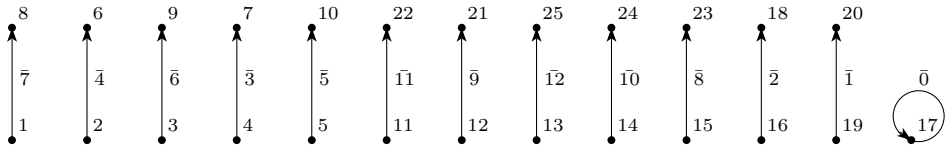}
  \caption{An extended Skolem labeling of $12\protect\overrightarrow{K_2}\cup\protect\overrightarrow{ L}$.}\label{Fig_6}
\end{figure}

\end{example}

Denote by $\epsilon_m$ the number of extended Skolem sequences of order $m$.

\begin{theorem}\label{theo: lower bound extended Skolem from extended_Skolem}
Let $m$ and $n$ be two positive integers, $n$ odd. Then,
$$\epsilon_{mn}\ge 2 |\mathcal{S}_n|^m\epsilon_m.$$
\end{theorem}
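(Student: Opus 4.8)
The plan is to mimic the counting argument already used for Langford sequences in Theorem~\ref{theo: lower bound c'-Langford from c-Langford}, now applied to the construction of Theorem~\ref{theo: Extended-Skolem from a previuos Extended-Skolem}. Fix an extended (or hooked) Skolem sequence $(s_1,\ldots,s_{2m+1})$ of order $m$ and let $D=m\overrightarrow{K_2}\cup\overrightarrow{L}$ be the associated labeled digraph, as in that theorem. The construction takes as input a choice of a rotation super edge-magic digraph from $\mathcal{RS}_n$ for each of the $m$ arcs of $m\overrightarrow{K_2}$ (there are $|\mathcal{RS}_n|=|\mathcal{S}_n|$ choices per arc, hence $|\mathcal{S}_n|^m$ functions $h_1$), together with a choice of $R\in\{R_1,R_2\}$ for the loop; and it outputs an extended Skolem sequence of order $mn+(n-1)/2$. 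So the three sources of multiplicity are: the $\epsilon_m$ input sequences, the $|\mathcal{S}_n|^m$ functions $h_1$, and the factor $2$ from the loop. The goal is to show that distinct choices in \emph{any} of these three coordinates produce distinct output sequences.

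First I would handle the two coordinates already treated in Theorem~\ref{theo: lower bound c'-Langford from c-Langford}. For distinct input sequences $L_1\ne L_2$ of order $m$, the argument of part (i) of that proof carries over verbatim: letting $t$ be the first position where they differ, the vertex $n(t-1)+1$ in $D_1'$ is adjacent into the block $\{n(v_1-1)+j\}$ while in $D_2'$ it is adjacent into a different block $\{n(v_2-1)+j\}$, so the output digraphs $m\overrightarrow{K_2}$-parts differ and hence so do the recovered sequences; note this works since the loop-derived part $R'$ contributes the fixed pattern on the top $(n-1)/2$ pairs plus the loop, independently of the input. Similarly, for a fixed input sequence and two functions $h_1\ne h_1'$, part (ii) of that proof applies: there is an arc $(u,v)$ of $m\overrightarrow{K_2}$ with $h_1((u,v))\ne h_1'((u,v))$, so some $(i,j)\in E(h_1((u,v)))\setminus E(h_1'((u,v)))$ gives an adjacency $n(u-1)+i\to n(v-1)+j$ present in one output and absent in the other.

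The genuinely new point is the factor $2$ coming from the choice $R\in\{R_1,R_2\}$ for the loop, and this is the step I expect to be the main obstacle. Here I would argue that $R_1$ and $R_2$ are distinct elements of $\mathcal{RS}_n$ — they arise from the two distinct strong orientations $S_1,S_2$ of the edge-magic cycle $C_n$, which have different adjacency matrices, hence their rotations do too — so there is an arc $(i,j)$ lying in $E(R_1)\setminus E(R_2)$ (or vice versa) with $j\ne i$ (the unique loop $(\,(n+1)/2,(n+1)/2\,)$, the rotation of the fixed point, is common to both). If $a$ is the loop vertex of $D$, this yields the adjacency $n(a-1)+i\to n(a-1)+j$ in one output digraph but not the other; after deleting the negative arc of each digon as in the proof of Theorem~\ref{theo: Extended-Skolem from a previuos Extended-Skolem}, one must check this surviving positive arc is still distinguishing, which it is because $R'\cong((n-1)/2)\overrightarrow{K_2}\cup\overrightarrow{L}$ records, for each $k\in[1,(n-1)/2]$, \emph{which} two positions among $\{n(a-1)+1,\ldots,na\}$ are paired at difference $k$, and $R_1,R_2$ differ in at least one such pairing. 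Since these three coordinates are independent and any change in one already forces a different output, the total count is at least $2\,|\mathcal{S}_n|^m\epsilon_m$, which is the claim. $\Box$
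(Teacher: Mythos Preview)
Your approach is essentially the same as the paper's, which simply refers back to Theorem~\ref{theo: lower bound c'-Langford from c-Langford} and notes that the loop has exactly the two possible images $R_1,R_2$ from Lemma~\ref{lemma: Rotated_canonic}; you have just expanded the details. One small inaccuracy: the loops of $R_1$ and $R_2$ are \emph{not} both at $(n+1)/2$ --- for example when $n=5$ they sit at $2$ and at $4$ respectively --- but this only strengthens your distinguishing argument, since the position of the zero in the output sequence already differs.
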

\begin{proof}
The proof is similar to the one of Theorem \ref{theo: lower bound c'-Langford from c-Langford}. The only difference is that, when we consider the function $h$, the loop only has two possible images under $h$, namely the digraphs $R_1$ or $R_2$ introduced in the proof of Lemma \ref{lemma: Rotated_canonic}.\qed
\end{proof}

\section{Conclusions}
The goal of this paper is to show a new application of labeled super edge-magic digraphs to a well known and deeply studied problem: Skolem and Langford sequences.
It is possible to find in the literature applications of Skolem and Langford sequences to graph labelings. However, we are not aware of applications of graph labelings to construct Skolem and Langford sequences. In this paper, we use super edge-magic labelings of digraphs in order to get an exponential number of Langford sequences with certain orders and defects. Furthermore, we also obtain, using similar techniques, an exponential number of extended hooked sequences.
Recall that, for every Skolem sequence, we can associate a trivial extended Skolem sequence just by placing a $0$ in the last position of the sequence. This fact is not very interesting by itself. Nevertheless, these type of extended Skolem sequences may be interesting, since using them together with (certain) super edge-magic labelings will allow us to produce many extended (hooked) Skolem sequences. We have introduced this approach to the problem of finding Langford, extended and hooked Skolem sequences since we believe that it is a new and unexpected way to attack the problem. This introduces new light into an old problem that has proven to be very hard.

\noindent {\bf Acknowledgements}
The research conducted in this document by the first author has been supported by the Spanish Research Council under project
MTM2011-28800-C02-01 and symbolically by the Catalan Research Council
under grant 2014SGR1147.

% BibTeX users please use one of
%\bibliographystyle{spbasic}      % basic style, author-year citations
%\bibliographystyle{spmpsci}      % mathematics and physical sciences
%\bibliographystyle{spphys}       % APS-like style for physics
%\bibliography{}   % name your BibTeX data base

% Non-BibTeX users please use

\end{document}